\newtheorem{theorem}{Theorem}[section]
\newtheorem{lemma}[theorem]{Lemma}
\newtheorem{proposition}[theorem]{Proposition}
\newtheorem{corollary}[theorem]{Corollary}
\theoremstyle{definition}
\theoremstyle{remark}
\numberwithin{equation}{section}
\newcommand{\PP}{\mathcal{P}}
\newcommand{\R}{\mathbb{R}}
\newcommand{\HH}{\mathbb{H}}
\newcommand{\MM}{\mathbb{M}}
\newcommand{\A}{B(\HH)}
\newcommand{\leqs}{\leqslant}
\newcommand{\geqs}{\geqslant}
\newcommand{\ap}{\alpha}
\newcommand{\bt}{\beta}
\newcommand{\ep}{\epsilon}
\newcommand{\ld }{\lambda}
\newcommand{\Om}{\Omega}
\newcommand{\sm}{\,\sigma\,}
\newcommand{\norm}[1]{\lVert #1 \rVert}
\newcommand{\Kant}{\dfrac{a^2+b^2}{2ab}}
\newcommand{\IntAweight}[1]{\int_{\Om} W_t^{\frac{1}{2}} #1 W_t^{\frac{1}{2}} \,d \mu(t)}
\newcommand{\IntWeightT}[1]{\int_{\Om} {W_{t}}^{\frac{1}{2}} {#1} {W_{t}}^{\frac{1}{2}} \,d \mu(t)}
\DeclareMathOperator{\Sp}{Sp}
\begin{document}
\setcounter{page}{1}

\title[Kantorovich Type Integral Inequalities]{Kantorovich Type Integral Inequalities for Tensor Product of Continuous Fields of Hilbert Space Operators}

\author[P. Chansangiam]{Pattrawut Chansangiam$^{*}$}

\address{$^{*}$ Department of Mathematics, Faculty of Science, King Mongkut's Institute of Technology
Ladkrabang, Chalongkrung Rd., Ladkrabang, Bangkok 10520, Thailand.}
\email{\textcolor[rgb]{0.00,0.00,0.84}{pattrawut.c@gmail.com}}

\subjclass[2010]{Primary 47A63; Secondary 26D15, 47A64.}

\keywords{Kantorovich inequality, continuous field of operators, tensor product, operator mean.}

\date{Received: xxxxxx; Revised: yyyyyy; Accepted: zzzzzz.
\newline \indent $^{*}$ Corresponding author}

\begin{abstract}
This paper presents a number of Kantorovich type integral inequalities involving
	tensor products of continuous fields of bounded linear operators on a Hilbert space. 
	Kantorovich type inequality in which the product is replaced by an operator mean is also considered.
	Such inequalities include discrete inequalities as special cases.
	Moreover, some generalizations of an additive Gr\"{u}ss integral inequality for operators
	are obtained.
\end{abstract} 

\maketitle

\section{Introduction}

The classical Kantorovich inequality \cite{Kantorovich} asserts that for real numbers $a_i$ and $w_i$
such that $0<a\leqs a_i \leqs b$ and $w_i \geqs 0$ for all $1 \leqs i \leqs n$, we have

\begin{equation} \label{eq: classical Kant ineq}
	\left( \sum_{i=1}^n w_i a_i \right) \left( \sum_{i=1}^n \frac{w_i}{a_i} \right)
	\:\leqs\: \frac{(a+b)^2}{4ab} \left( \sum_{i=1}^n w_i \right)^2.
\end{equation}
This inequality can be viewed as a reverse weighted arithmetic-harmonic mean (AM-HM) inequality.
The bound \eqref{eq: classical Kant ineq} is used for convergence analysis in 
numerical methods and statistics.
Over the years, various generalizations, variations and refinements of this inequality in several settings
have been investigated by many authors. 
This inequality has been proved to be equivalent to many inequalities, e.g. 
Cauchy-Schwarz-Bunyakovsky inequality and Wielant's inequality; see also  \cite{Greub-Rheinboldt, Zhang}.
In the literature, there is an integral version of Kantorovich inequality as follows. 
For  a Riemann integrable function $f:[\ap,\bt] \to \R$
with $a \leqs f(x) \leqs b$ for all $x \in [\ap,\bt]$, we have (e.g. \cite{Mitrinovic})
\begin{align}
	\int_{\ap}^{\bt} f(x)^2 \,dx \leqs \frac{(a+b)^2}{4ab} \left( \int_{\ap}^{\bt}  f(x)\,dx \right)^2.
	\label{eq: additive Gruss ineq}
\end{align}
This inequality is also called an additive version of Gr\"{u}ss inequality.
.


Many matrix versions of Kantorovich inequality were obtained in the literature,
e.g. \cite{Baksalary, Liu-Neudecker, Marshall-Olkin}.
Let $\MM_k$ be the algebra of $k$-by-$k$ complex matrices.
Recall the Hadamard product of $A, B \in \MM_k$ is defined to be the entrywise product:
\begin{align*}
	A \circ B \:=\: [a_{ij} \,b_{ij}] \:\in\: \MM_k.
\end{align*}
A matrix analogue of this inequality involving Hadamard product 
is given in \cite{Matharu-Aujla} as follows.

\begin{theorem}[\cite{Matharu-Aujla}, Theorem 2.2]   \label{thm: Kant Hadamard - Matrix}
	For each $i=1,2,\dots,n$, let $A_i \in \MM_k$ 
	be a positive definite matrix such that $0<aI \leqs A_i \leqs bI$
	and $W_i \in \MM_k$ a positive semidefinite matrix. Then
	\begin{equation}
		\sum_{i=1}^n W_i^{\frac{1}{2}} A_i W_i^{\frac{1}{2}} \circ \sum_{i=1}^n W_i^{\frac{1}{2}} A_i^{-1} W_i^{\frac{1}{2}}
		\leqs \Kant \left(\sum_{i=1}^n W_i \circ \sum_{i=1}^n W_i \right).
	\end{equation}
\end{theorem}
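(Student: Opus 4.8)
The plan is to reduce the whole statement to a single positive operator and one congruence. Assemble the data into $\mathbf{A} := A_1 \oplus \cdots \oplus A_n$ acting on $\HH \oplus \cdots \oplus \HH$ ($n$ copies) together with the column operator $R \colon \HH \to \HH \oplus \cdots \oplus \HH$, $R\zeta = (W_1^{1/2}\zeta, \dots, W_n^{1/2}\zeta)$. Then $aI \leqs \mathbf{A} \leqs bI$, and a direct computation gives $X := \sum_i W_i^{1/2} A_i W_i^{1/2} = R^* \mathbf{A} R$, $Y := \sum_i W_i^{1/2} A_i^{-1} W_i^{1/2} = R^* \mathbf{A}^{-1} R$, and $Z := \sum_i W_i = R^* R$. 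First I would record the scalar fact $t + ab\,t^{-1} \leqs a+b$ for $t \in [a,b]$, which is just $(t-a)(b-t) \geqs 0$ divided by $t$; by the functional calculus this lifts to $\mathbf{A} + ab\,\mathbf{A}^{-1} \leqs (a+b) I$, and congruence by $R$ yields the additive inequality $X + ab\,Y \leqs (a+b) Z$.

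Next I would trade the Hadamard products for tensor products. With the isometry $J \colon \HH \to \HH \otimes \HH$, $J e_i = e_i \otimes e_i$, one has $U \circ V = J^*(U \otimes V) J$ for all $U, V \in \MM_k$, so the assertion is equivalent to $J^*(X \otimes Y) J \leqs \frac{(a+b)^2}{4ab}\, J^*(Z \otimes Z) J$. Setting $\widetilde{R} := (R \otimes R) J$ and using $X \otimes Y = (R \otimes R)^* (\mathbf{A} \otimes \mathbf{A}^{-1})(R \otimes R)$, both sides become congruences by $\widetilde{R}$, namely $X \circ Y = \widetilde{R}^*(\mathbf{A} \otimes \mathbf{A}^{-1}) \widetilde{R}$ and $Z \circ Z = \widetilde{R}^* \widetilde{R}$. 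Hence it suffices to prove
\begin{equation*}
	\langle w, (\mathbf{A} \otimes \mathbf{A}^{-1})\, w \rangle \:\leqs\: \frac{(a+b)^2}{4ab}\, \norm{w}^2
\end{equation*}
for every $w$ in the range of $\widetilde{R}$, that is, for every $w = \sum_i \xi_i\, (r_i \otimes r_i)$ with $r_i := R e_i$. Since such $w$ are invariant under the tensor flip, $\mathbf{A} \otimes \mathbf{A}^{-1}$ may be replaced by its symmetrization here.

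For a single diagonal tensor $r \otimes r$ the displayed bound is exactly the vector Kantorovich inequality $\langle r, \mathbf{A} r\rangle \langle r, \mathbf{A}^{-1} r\rangle \leqs \frac{(a+b)^2}{4ab}\norm{r}^4$, obtained by squaring the scalar arithmetic--geometric estimate $2\sqrt{ab\,\langle r, \mathbf{A} r\rangle \langle r, \mathbf{A}^{-1} r\rangle} \leqs \langle r, (\mathbf{A} + ab\,\mathbf{A}^{-1}) r\rangle \leqs (a+b)\norm{r}^2$. The hard part will be the off-diagonal contributions $\langle r_i, \mathbf{A} r_j\rangle \langle r_i, \mathbf{A}^{-1} r_j\rangle$ with $i \neq j$: a termwise application of the vector inequality is not available, and the naive Hadamard arithmetic--geometric bound is genuinely false, since $(X - ab\,Y) \circ (X - ab\,Y)$ need not be positive. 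I expect to surmount this precisely where the interval constraint $[a,b]$ enters. The Gram matrix $\left(\begin{smallmatrix} X & Z \\ Z & Y \end{smallmatrix}\right)$ is positive, being the Gram matrix of $\mathbf{A}^{1/2} R$ and $\mathbf{A}^{-1/2} R$; together with its companion $\left(\begin{smallmatrix} Y & Z \\ Z & X \end{smallmatrix}\right)$ and Schur's product theorem it already yields the reverse Fiedler-type bound $X \circ Y \geqs Z \circ Z$. To force the correct direction with the sharp constant I would exploit the factorization $(a+b) I - \mathbf{A} - ab\,\mathbf{A}^{-1} = \mathbf{A}^{-1}(\mathbf{A} - aI)(bI - \mathbf{A}) \geqs 0$ and build from it a second positive operator matrix whose Schur product with $\left(\begin{smallmatrix} X & Z \\ Z & Y \end{smallmatrix}\right)$ manufactures exactly the combination $\frac{(a+b)^2}{4ab}\, Z \circ Z - X \circ Y \geqs 0$. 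Reconciling this auxiliary factorization with the cross terms is the step I anticipate to be the main obstacle.
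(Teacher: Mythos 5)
Your reduction is set up correctly — the block operator $\mathbf{A}=A_1\oplus\cdots\oplus A_n$, the column map $R$, the identities $X=R^*\mathbf{A}R$, $Y=R^*\mathbf{A}^{-1}R$, $Z=R^*R$, the pinching $U\circ V=J^*(U\otimes V)J$, and the observation that vectors in the range of $\widetilde R=(R\otimes R)J$ are flip-invariant are all sound. But the proof does not close: you end by declaring the off-diagonal cross terms an unresolved obstacle, and that step is the entire content of the version you are aiming at. The gap is largely self-inflicted: you are chasing the classical constant $\frac{(a+b)^2}{4ab}$, whereas the theorem as stated here carries the larger constant $\Kant$ (note $\frac{(a+b)^2}{4ab}\leqs\Kant$ because $(a-b)^2\geqs 0$). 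For the stated constant your own reduction finishes in one line: on the flip-invariant range of $\widetilde R$ you may replace $\mathbf{A}\otimes\mathbf{A}^{-1}$ by its symmetrization $\frac12(\mathbf{A}\otimes\mathbf{A}^{-1}+\mathbf{A}^{-1}\otimes\mathbf{A})$, and this operator is $\leqs\Kant I$ \emph{globally} on the whole tensor product space — no restriction to $\operatorname{ran}\widetilde R$ and no cross-term analysis — since $\mathbf{A}\otimes I$ and $I\otimes\mathbf{A}$ commute with joint spectrum in $[a,b]^2$ and the scalar bound $x/y+y/x\leqs a/b+b/a$ applies; this is exactly Lemma \ref{lem: Lemma 1} of the paper, proved there by a norm estimate. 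Compressing by $\widetilde R$ then yields $X\circ Y=\widetilde R^*(\mathbf{A}\otimes\mathbf{A}^{-1})\widetilde R\leqs\Kant\,\widetilde R^*\widetilde R=\Kant\, (Z\circ Z)$.

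For context: the paper does not prove Theorem \ref{thm: Kant Hadamard - Matrix} at all — it is quoted from \cite{Matharu-Aujla} as motivation — but with the constant $\Kant$ it is precisely the image, under the pinching $J^*(\cdot)J$, of the paper's own discrete tensor-product inequality (the corollary of Theorem \ref{thm: Kant integral W_t tensor}), whose proof rests entirely on Lemma \ref{lem: Lemma 1}. If you insist on the sharper constant $\frac{(a+b)^2}{4ab}$ (which is what \cite{Matharu-Aujla} actually establish), then your additive inequality $X+ab\,Y\leqs(a+b)Z$ is the right starting point, but the passage from it to the Hadamard-product bound genuinely requires the $2\times 2$ block positivity and Schur-product argument you only gesture at; as written, that step is missing and the proof is incomplete.
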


Kantorovich inequality can be regarded as a reverse of the following Fiedler's inequality:
\begin{align*}
		A \circ A^{-1} \:\geqs\: I.
	\end{align*}
for any positive definite matrix $A \in \MM_k$.

Operator versions of Kantorovich inequality was investigated, for instance, 
in \cite{Dragomir, Dragomir2012, Furuta, Moslehian} and references therein.
Kantorovich type inequality where the product is replaced by an operator mean
was considered in \cite{Nakamoto-Nakamura, Yamazaki}.

In this paper, we establish various integral inequalities of Kantorovich type
for continuous field of Hilbert space operators.
The inequalities \eqref{eq: classical Kant ineq} and \eqref{eq: additive Gruss ineq}  
are generalized in many ways in terms of 
Bochner integrals on the Banach space of bounded linear operators.
The products between two operators considered here are the Hilbert tensor product.
Moreover, Kantorovich type inequalities involving Kubo-Ando operator means are obtained.
Such integral inequalities include discrete inequalities as special cases.
In particular, we get some generalizations of additive Gr\"{u}ss type inequality for operators.

This paper is organised as follows.
We set up basic notations  about continuous fields of operators 
and state the main assumption used throughout the paper in Section 2.
Then Section 3 deals with Kantorovich type integral inequalities 
involving tensor product of continuous fields of operators.
In Section 4,  we first recall Kubo-Ando theory of operator means and then derive Kantorovich integral inequalities involving operator means.
In the last section, we derive further operator integral inequalities, including 
additive Gr\"{u}ss inequality.

\section{Continuous field of operators and its integralability}

\subsection{Continuous field of operators}
Throughout this paper, let $\HH$ be a complex Hilbert space. 
Denote by $B(\HH)$ the C$^*$-algebra of bounded linear operators acting on $\HH$.
The spectrum of $A \in B(\HH)$ is written as $\Sp(A)$.
We shall write $I$ for the identity operator on a Hilbert space; 
the space mentioned here should be clear from the context.
 

Let $\Omega$ be a locally compact Hausdorff space endowed with a finite Radon measure $\mu$.
A  family $(A_t)_{t \in \Omega}$ of operators in $\A$ is said to be 
a \emph{continuous field of operators} if the parametrization $t \mapsto A_t$ is norm continuous
on $\Omega$.  
If, in addition, the norm function $t \mapsto \norm{A_t}$ is Lebesgue integrable on $\Omega$,
then we can form the Bochner integral of $A_t's$ as follows (see also \cite{Hansen-Pecaric-Peric}).
Let $\PP$ be a partition of $\Om$ into disjoint Borel subsets and let $\ep >0$ be a real number.
For each operator
$A_t$ in $\A$, we can approximate $A_t$ by a net of operators in the form
\begin{align*}
	F_{\PP, \ep} (A_t) = \sum_{i=1}^n \mu(E_i) A_{t_i} 
\end{align*} 
where $E_i \in \PP$ and
$
	t_i \in E_i \subseteq \{t \in \Om \,:\, \norm{A_t - A_{t_i}} < \epsilon\} 
$ 
for each $1 \leqs i \leqs n$.
Then the net $F_{\PP,\ep}(A_t)$ converges uniformly to the Bochner integral
\begin{align*}
	\int_{\Om} A_t \,d\mu(t).
\end{align*}
The set of continuous functions from $\Om$ to $\A$
becomes a C$^*$-algebra under the pointwise operations and the C$^*$-norm
\begin{align*}
	\norm{(A_t)_{t \in \Om}} = \sup_{t \in \Om} \,\norm{A_t}.
\end{align*}


\subsection{Main hypothesis}
Let $(A_t)_{t \in \Om}$  be a bounded continuous field of strictly positive operators
in $\A$ such that 
\begin{itemize}
	\item[-]	the norm function $t \mapsto \norm{A_t}$ is Lebesgue integrable on $\Omega$ 
	\item[-]	$\Sp(A_t) \subseteq [a,b] \subseteq (0,\infty)$ for each $t \in \Om$.
\end{itemize}
Let $(W_t)_{t \in \Om}$ be a continuous field of positive operators in $\A$.

\begin{proposition} \label{prop: Bochner integralability}
	Assume Main hypothesis. For any continuous function $f:[a,b] \to \R$, 
	we can form the Bochner integral
\begin{align*}
	\int_{\Om} W_t^{\frac{1}{2}} f(A_t) W_t^{\frac{1}{2}} \,d\mu(t).
\end{align*}
In addition, if $f([a,b]) \subseteq [0,\infty)$, then this operator is positive.
\end{proposition}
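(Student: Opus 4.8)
The plan is to invoke the Bochner-integral construction recalled in Section~2, which applies to any continuous field of operators whose norm function is Lebesgue integrable. Accordingly, writing $G_t := W_t^{\frac{1}{2}} f(A_t) W_t^{\frac{1}{2}}$, I would verify that $t \mapsto G_t$ is norm continuous on $\Om$ and that $t \mapsto \norm{G_t}$ is integrable; the positivity assertion is then handled separately by a closed-cone argument. The main obstacle is the first verification, since it hinges on the norm continuity of the continuous-functional-calculus map $A \mapsto f(A)$; by comparison the integrability estimate and the positivity argument are routine.

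For norm continuity, recall each $A_t$ is self-adjoint with $\Sp(A_t) \subseteq [a,b]$, so $f(A_t)$ is well defined. I claim $A \mapsto f(A)$ is norm continuous on the set of self-adjoint operators with spectrum in $[a,b]$. Given $\ep > 0$, choose by the Weierstrass approximation theorem a polynomial $p$ with $\sup_{[a,b]} |f - p| < \ep/3$. Since the functional calculus is isometric, $\norm{f(A) - p(A)} = \sup_{\ld \in \Sp(A)} |f(\ld) - p(\ld)| \leqs \ep/3$ uniformly in such $A$, while $\norm{p(A) - p(B)} \to 0$ as $\norm{A - B} \to 0$, because $p(A)$ is a fixed algebraic expression in $A$ and the constraint $\Sp(A), \Sp(B) \subseteq [a,b]$ keeps $\norm{A}, \norm{B} \leqs b$. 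A three-term estimate then gives continuity of $t \mapsto f(A_t)$. The field $t \mapsto W_t^{\frac{1}{2}}$ is norm continuous as well, e.g.\ via the H\"{o}lder-type bound $\norm{W_t^{\frac{1}{2}} - W_s^{\frac{1}{2}}} \leqs \norm{W_t - W_s}^{\frac{1}{2}}$ for positive operators. Since multiplication of operator fields is jointly norm continuous on norm-bounded sets, the product $t \mapsto G_t$ is norm continuous.

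For integrability, the $C^*$-identity gives $\norm{W_t^{\frac{1}{2}}}^2 = \norm{W_t}$, so with $M_f := \max_{\ld \in [a,b]} |f(\ld)|$ one estimates
\begin{align*}
	\norm{G_t} \:\leqs\: \norm{W_t^{\frac{1}{2}}}^2 \,\norm{f(A_t)} \:=\: \norm{W_t} \,\norm{f(A_t)} \:\leqs\: M_f \,\norm{W_t},
\end{align*}
using $\norm{f(A_t)} = \max_{\ld \in \Sp(A_t)} |f(\ld)| \leqs M_f < \infty$. As $t \mapsto \norm{W_t}$ is integrable, so is $t \mapsto \norm{G_t}$, and hence the Bochner integral $\int_{\Om} G_t \, d\mu(t)$ exists.

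For the positivity assertion, assume $f([a,b]) \subseteq [0,\infty)$. Then $\Sp(f(A_t)) = f(\Sp(A_t)) \subseteq [0,\infty)$ forces $f(A_t) \geqs 0$, hence $G_t = W_t^{\frac{1}{2}} f(A_t) W_t^{\frac{1}{2}} \geqs 0$ as a congruence transform of a positive operator. By the construction of Section~2, $\int_{\Om} G_t \, d\mu(t)$ is the norm limit of the approximating sums $F_{\PP,\ep}(G_t) = \sum_i \mu(E_i) G_{t_i}$, each a nonnegative linear combination of positive operators and therefore positive. Because the positive cone of $\A$ is norm closed, the limit is positive, which would conclude the argument.
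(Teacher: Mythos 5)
Your proposal is correct and follows essentially the same route as the paper: since $\mu$ is finite, Bochner integrability reduces to the bound $\norm{W_t^{\frac{1}{2}} f(A_t) W_t^{\frac{1}{2}}} \leqs \norm{f}_{\infty}\norm{W_t}$, and positivity follows because each integrand is a congruence transform of the positive operator $f(A_t)$. You simply supply more detail than the paper does, in particular the norm continuity of the functional calculus and the norm-closedness of the positive cone, which the paper leaves implicit.
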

\begin{proof}
	Since $(\Om, \mu)$ is a finite measure space, it suffices to prove the Lebesgue integralability
	of its norm function. Indeed, we have
		\begin{align*}
		\int_{\Om} \norm{W_t^{\frac{1}{2}} f(A_t) W_t^{\frac{1}{2}}} \,d\mu(t) 
		&\leqs \int_{\Om} \norm{W_t^{\frac{1}{2}}} \cdot \norm{f(A_t)} \cdot \norm{W_t^{\frac{1}{2}}} \,d\mu(t) \\
		&\leqs \int_{\Om} \norm{W_t} \cdot \norm{f}_{\infty}   \,d\mu(t) \\
		&\leqs \int_{\Om} \sup_{t \in \Om} \norm{W_t} \cdot \norm{f}_{\infty}  \,d\mu(t) \\
		&= \mu(\Om)  \norm{f}_{\infty} \sup_{t \in \Om} \norm{W_t}    \\
		&< \infty.
	\end{align*}
	Suppose that $f$ is positive on $[a,b]$. 
	Then $f(A_t)$ is a positive operator for each $t \in \Om$.
	It follows that the integral is also positive. 
\end{proof}

\section{Integral inequalities of Kantorovich type for tensor product of operators}

In this section, we derive many integral inequalities of Kantorovich type for operators
in which the product is given by the tensor product.
Such inequalities includes discrete inequalities as special cases.
In particular, we get a reverse of weighted AM-HM operator inequality.

\subsection{Tensor products}
For each fixed $X \in B(\HH)$, the map $A \mapsto A \otimes X$ and the map $A \mapsto X \otimes A$
are bounded linear operators
from $B(\HH)$ to $B(\HH \otimes \HH)$.
It follows that
\begin{equation}
	\int_{\Om} A_t \,d \mu(t) \otimes X \:=\: \int_{\Om}( A_t \otimes X) \,d \mu(t). 
	\label{eq: int and tensor product}
\end{equation}
Moreover, these maps preserve positivity when the multiplier is a positive operator.
For each $A,B \in B(\HH)$, we denote
\begin{align*}
	A \otimes_s B \:=\: \frac{1}{2}(A \otimes B + B \otimes A).
\end{align*}
Recall that the tensor power $A^{\otimes 2}$ is defined to be $A \otimes A$.

We start with the following estimation about tensor products. 

\begin{lemma} \label{lem: Lemma 1}
	The minimum constant $k$ for which the inequality 
	\begin{align}
		A \otimes B^{-1} + A^{-1} \otimes B \:\leqs\:  k I.  
		\label{eq: A otimes B-1}
	\end{align}
	holds for all selfadjoint operators $A, B \in B(\HH)$ such that 
	$\Sp(A), \Sp(B) \subseteq [a,b] \subseteq (0,\infty)$ is determined by $k = (a^2+b^2)/(ab)$.	
	Here, $I$ denotes the identity on $\HH \otimes \HH$.
\end{lemma}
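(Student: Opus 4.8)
The plan is to reduce the two-variable tensor inequality to a statement about a single pair of commuting positive operators, and then to a scalar optimisation problem. First I would set $X = A \otimes I$ and $Y = I \otimes B$ on $\HH \otimes \HH$. Since $A$ and $B$ are strictly positive (their spectra lie in $[a,b] \subseteq (0,\infty)$), both $X$ and $Y$ are positive and invertible with $\Sp(X) = \Sp(A) \subseteq [a,b]$ and $\Sp(Y) = \Sp(B) \subseteq [a,b]$, and crucially they commute since $(A \otimes I)(I \otimes B) = A \otimes B = (I \otimes B)(A \otimes I)$. Because $A \otimes B^{-1} = X Y^{-1}$ and $A^{-1} \otimes B = X^{-1} Y$, the claimed inequality \eqref{eq: A otimes B-1} is equivalent to $X Y^{-1} + X^{-1} Y \leqs k I$ for commuting positive invertible operators with spectra in $[a,b]$.

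For the upper bound I would multiply through by the positive operator $XY$, which commutes with everything in sight, reducing the goal to the equivalent inequality $X^2 + Y^2 \leqs k\, XY$. The key algebraic observation is the factorisation
\[
 k\, XY - X^2 - Y^2 \:=\: -\left(X - \tfrac{a}{b}\, Y\right)\left(X - \tfrac{b}{a}\, Y\right),
\]
valid precisely because $k = (a^2+b^2)/(ab)$ is the sum of the reciprocal pair $a/b$ and $b/a$, and because $X$ and $Y$ commute. It then remains to check that the two factors have opposite signs: from $X \geqs a I$ and $\tfrac{a}{b} Y \leqs \tfrac{a}{b}\, b I = a I$ one gets $X - \tfrac{a}{b} Y \geqs 0$, while $X \leqs b I$ and $\tfrac{b}{a} Y \geqs \tfrac{b}{a}\, a I = b I$ give $X - \tfrac{b}{a} Y \leqs 0$. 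As these two self-adjoint operators commute, their product is negative semidefinite, which yields $X^2 + Y^2 \leqs k\, XY$ and hence \eqref{eq: A otimes B-1}. Equivalently, one may invoke the joint functional calculus for the commuting pair $(X,Y)$ together with the elementary fact that $\max_{(x,y) \in [a,b]^2} \bigl(x/y + y/x\bigr) = (a^2+b^2)/(ab)$, attained at $(a,b)$ and $(b,a)$.

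To see that $k = (a^2+b^2)/(ab)$ cannot be lowered, I would exhibit a case of equality. Taking the scalar operators $A = a I$ and $B = b I$, whose spectra $\{a\}$ and $\{b\}$ are contained in $[a,b]$, gives
\[
 A \otimes B^{-1} + A^{-1} \otimes B \:=\: \frac{a}{b}\, I + \frac{b}{a}\, I \:=\: \frac{a^2+b^2}{ab}\, I,
\]
so equality holds in \eqref{eq: A otimes B-1}. Hence no smaller constant is admissible, and $k = (a^2+b^2)/(ab)$ is exactly the minimum.

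The routine parts — existence of inverses and square roots, the commutation of $A \otimes I$ with $I \otimes B$, and the arithmetic of the factorisation — are straightforward. The one step that demands care is the transfer of the scalar sign analysis to an operator inequality: that multiplying the operator inequality by the positive commuting operator $XY$ is legitimate, and that the product of two commuting self-adjoint factors of opposite sign is negative semidefinite. Both rest on the joint spectral theorem for the commuting family $\{X, Y\}$, so I expect the main obstacle to be nothing deeper than setting up this commuting-operator framework cleanly on $\HH \otimes \HH$.
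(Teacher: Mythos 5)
Your proof is correct, but it takes a genuinely different route from the paper's. The paper argues through the operator norm: since the left-hand side of \eqref{eq: A otimes B-1} is selfadjoint, it suffices to bound $\norm{A \otimes B^{-1} + A^{-1} \otimes B}$ by $k$, which the paper does via the triangle inequality and multiplicativity of the tensor norm, writing $\norm{A\otimes B^{-1}}+\norm{A^{-1}\otimes B}=\norm{A}\norm{B}^{-1}+\norm{A}^{-1}\norm{B}$ and then invoking the scalar fact that $x/y+y/x\leqs a/b+b/a$ on $[a,b]^2$. You instead pass to the commuting pair $X=A\otimes I$, $Y=I\otimes B$, reduce to the equivalent inequality $X^2+Y^2\leqs k\,XY$ by congruence with $(XY)^{1/2}$, and conclude from the factorisation $k\,XY-X^2-Y^2=-(X-\tfrac{a}{b}Y)(X-\tfrac{b}{a}Y)$, the two factors being commuting selfadjoint operators of opposite sign; both proofs use the same extremal example $A=aI$, $B=bI$ for sharpness. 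What your approach buys is rigour at precisely the delicate point of the paper's argument: the identity $\norm{A\otimes B^{-1}}=\norm{A}\norm{B}^{-1}$ requires $\norm{B^{-1}}=\norm{B}^{-1}$, which fails for non-scalar positive $B$ (one has $\norm{B^{-1}}=1/\min\Sp(B)$, not $1/\norm{B}$), and the honest triangle-inequality bound only gives $2b/a$. Your commuting-factorisation argument --- equivalently, the joint functional calculus applied to $f(x,y)=x/y+y/x$ on $\Sp(A)\times\Sp(B)$ --- avoids this issue entirely and still delivers the sharp constant. The two routine facts you flag (that multiplying by the commuting positive invertible $XY$ preserves the inequality, and that $PQ=P^{1/2}QP^{1/2}\leqs 0$ for commuting $P\geqs 0$, $Q\leqs 0$) are exactly the right things to spell out in a final write-up.
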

\begin{proof}
	First, note that the minimum constant $k$ for which the scalar inequality 
	\begin{align*}
		\frac{x}{y} + \frac{y}{x} \:\leqs\: k
	\end{align*}
	holds for all real numbers $x,y$ such that $x,y \in [a,b]$ is given by $k = (a/b)+(b/a)$.
	
	For selfadjoint operators $A$ and $B$ such that  
	$\Sp(A), \Sp(B) \subseteq [a,b] \subseteq (0,\infty)$, we have
	$\norm{A}, \norm{B} \in [a,b]$ and hence
	\begin{align*}
		\norm{A \otimes B^{-1} + A^{-1} \otimes B} 
		\:&\leqs\: \norm{A \otimes B^{-1}} + \norm{A^{-1} \otimes B} \\
		\:&=\: \norm{A} \norm{B}^{-1} + \norm{A}^{-1} \norm{B} \\
		\:&\leqs\: \dfrac{a^2+b^2}{ab}.
	\end{align*}
	Thus, we obtain the inequality \eqref{eq: A otimes B-1}. 
	The constant $(a^2+b^2)/(ab)$ cannot be improved since
	the case $A = aI_{\HH}$ and $B=bI_{\HH}$ is reduced to the scalar case.	
\end{proof}

\subsection{Kantorovich type integral inequalities}
The following theorem is a Kantorovich type integral inequality.

\begin{theorem} \label{thm: Kant integral W_t tensor}
	Under Main hypothesis, the following integral inequality holds
	\begin{equation} \label{eq: Kant ineq for tensor}
		\int_{\Om} W_t^{\frac{1}{2}} A_t W_t^{\frac{1}{2}} \,d \mu(t) \otimes_s \IntAweight{A_t^{-1}} 
		\:\leqs\: \Kant \left( \int_{\Om} W_t \,d\mu(t) \right)^{ \otimes 2}.
	\end{equation}
	Moreover, the constant $(a^2+b^2)/(2ab)$ is best possible.
\end{theorem}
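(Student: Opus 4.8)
Here is how I would approach the theorem. The plan is to reduce the integral inequality, pointwise in the two integration variables, to the algebraic estimate of Lemma~\ref{lem: Lemma 1}, and then integrate. Write $P = \IntWeightS{A_s}$, $Q = \IntWeightT{A_t^{-1}}$ and $R = \int_\Om W_t\,d\mu(t)$, so that the left-hand side of \eqref{eq: Kant ineq for tensor} is $P \otimes_s Q = \tfrac12(P\otimes Q + Q\otimes P)$. First I would use the fact that $A\mapsto A\otimes X$ and $A\mapsto X\otimes A$ are bounded linear maps commuting with the Bochner integral, i.e. identity \eqref{eq: int and tensor product}, applied in each slot, to rewrite $P\otimes Q$ and $Q\otimes P$ as iterated Bochner integrals over $\Om\times\Om$. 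Averaging the two representations gives
\begin{align*}
P \otimes_s Q = \frac12 \int_\Om\!\int_\Om \Big[ (W_s^{\frac12} A_s W_s^{\frac12})\otimes(W_t^{\frac12} A_t^{-1} W_t^{\frac12}) + (W_s^{\frac12} A_s^{-1} W_s^{\frac12})\otimes(W_t^{\frac12} A_t W_t^{\frac12}) \Big]\,d\mu(s)\,d\mu(t).
\end{align*}

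Next I would factor the integrand using the multiplicativity of the tensor product, $(XYZ)\otimes(X'Y'Z') = (X\otimes X')(Y\otimes Y')(Z\otimes Z')$, to exhibit each summand as a conjugation of a fixed ``core'' operator by $W_s^{\frac12}\otimes W_t^{\frac12}$. Explicitly, the bracketed integrand equals
\begin{align*}
(W_s^{\frac12}\otimes W_t^{\frac12})\big( A_s\otimes A_t^{-1} + A_s^{-1}\otimes A_t \big)(W_s^{\frac12}\otimes W_t^{\frac12}).
\end{align*}
Since $\Sp(A_s),\Sp(A_t)\subseteq[a,b]$ for every $s,t\in\Om$, Lemma~\ref{lem: Lemma 1} (with $A=A_s$, $B=A_t$) gives $A_s\otimes A_t^{-1}+A_s^{-1}\otimes A_t \leqs \frac{a^2+b^2}{ab}\,I$ pointwise. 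Conjugating this operator inequality by the fixed selfadjoint operator $W_s^{\frac12}\otimes W_t^{\frac12}$ preserves the order, and $(W_s^{\frac12}\otimes W_t^{\frac12})^2 = W_s\otimes W_t$, so the integrand is dominated by $\frac{a^2+b^2}{ab}\,W_s\otimes W_t$. Integrating this pointwise bound over $\Om\times\Om$, where Bochner integration is positivity preserving as in Proposition~\ref{prop: Bochner integralability}, and using \eqref{eq: int and tensor product} once more to recombine $\int_\Om\!\int_\Om W_s\otimes W_t = R\otimes R = R^{\otimes2}$, yields $P\otimes_s Q \leqs \tfrac12\cdot\frac{a^2+b^2}{ab}\,R^{\otimes2} = \Kant\, R^{\otimes2}$, which is exactly \eqref{eq: Kant ineq for tensor}.

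The step I expect to require the most care is the passage to the iterated Bochner integral together with the justification that integrating the pointwise order $C_{s,t}\leqs D_{s,t}$ preserves the inequality; both rest on the boundedness and positivity-preservation of the maps $A\mapsto A\otimes X$ and on the approximation of the Bochner integral by the nets $F_{\PP,\ep}$, so I would spell these out by testing against product vectors $\xi\otimes\eta$ to reduce matters to scalar (Fubini) integrals. Finally, for sharpness I would take $\Om$ a single point of unit mass, $W\equiv I$, and $A$ a selfadjoint operator with $\{a,b\}\subseteq\Sp(A)$, for instance $A=\mathrm{diag}(a,b)$ on a two-dimensional space. Then the left-hand side reduces to $\tfrac12(A\otimes A^{-1}+A^{-1}\otimes A)$, whose largest eigenvalue is exactly $\frac{a^2+b^2}{2ab}$, so no smaller constant can serve; this is precisely the attainment already recorded in Lemma~\ref{lem: Lemma 1}.
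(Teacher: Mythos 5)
Your proposal is correct and follows essentially the same route as the paper: rewrite $P\otimes Q$ and $Q\otimes P$ as double Bochner integrals via \eqref{eq: int and tensor product}, factor the integrand as a conjugation of $A_s\otimes A_t^{-1}+A_s^{-1}\otimes A_t$ by $W_s^{1/2}\otimes W_t^{1/2}$, apply Lemma~\ref{lem: Lemma 1} pointwise, and integrate. Your sharpness argument (a one-point $\Om$ with $A=\mathrm{diag}(a,b)$, so that both tensor factors come from the same field) is in fact slightly more careful than the paper's brief appeal to Lemma~\ref{lem: Lemma 1}, which uses two different operators $aI$ and $bI$.
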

\begin{proof}
	For convenience, let us denote
	\begin{align*}
		X \:=\: \int_{\Om} {W_{t}}^{\frac{1}{2}} {A_t} {W_{t}}^{\frac{1}{2}} \,d \mu(t) \, \text{ and } \,
		Y \:=\: \int_{\Om} {W_{t}}^{\frac{1}{2}} {A_t}^{-1} {W_{t}}^{\frac{1}{2}} \,d \mu(t).
	\end{align*}
	It follows from the property \eqref{eq: int and tensor product} that
	\begin{align*}	
	X \otimes Y 
		\:&=\: \int_{\Om} \left( \IntWeightT{A_t} \right) \otimes W_r^{\frac{1}{2}} A_r^{-1} W_r^{\frac{1}{2}}\, d\mu(r) \\
		\:&=\:  	\iint_{\Om^2} \left( W_t^{\frac{1}{2}} A_t W_t^{\frac{1}{2}} \otimes W_r^{\frac{1}{2}} A_r^{-1} W_r^{\frac{1}{2}} 
			\right) \,d\mu(r)\,d\mu(t).
	\end{align*}
Similarly, we have
	\begin{align*}
		Y \otimes X	&=  	\iint_{\Om^2} \left( W_t^{\frac{1}{2}} {A_t}^{-1} W_t^{\frac{1}{2}} 
			\otimes W_r^{\frac{1}{2}} A_r W_r^{\frac{1}{2}} 
			\right) \,d\mu(r)\,d\mu(t).
		\end{align*}
It follows that
\begin{align*}
2(X \otimes_s Y) \:&=\: 
		\iint_{\Om^2} \left(W_t^{\frac{1}{2}} A_t W_t^{\frac{1}{2}} 
			\otimes W_r^{\frac{1}{2}} A_r^{-1} W_r^{\frac{1}{2}} 
			+ W_t^{\frac{1}{2}} A_t^{-1} W_t^{\frac{1}{2}} \otimes W_r^{\frac{1}{2}} A_r W_r^{\frac{1}{2}} \right) \,d\mu(r) \,d\mu(t) \\
		\:&=\: \iint_{\Om^2} (W_t \otimes W_r)^{\frac{1}{2}} 
			\left(A_t \otimes A_r^{-1} + A_t^{-1} \otimes A_r \right)
			(W_t \otimes W_r)^{\frac{1}{2}} \,d\mu(r) \,d\mu(t).
	\end{align*}
	By making use of Lemma \ref{lem: Lemma 1} and the property \eqref{eq: int and tensor product}, 
	we obtain
	\begin{align*}
		X \otimes_s Y	&\:\leqs\: \dfrac{1}{2} \int_{\Om} \int_{\Om} \dfrac{a^2+b^2}{ab}(W_t \otimes W_r)  
				\,d\mu(r) \,d\mu(t) \\
			&\:=\:	\dfrac{a^2+b^2}{2ab} \int_{\Om} \left( \int_{\Om} W_r\, d\mu(r) \right) 
				\otimes W_t \, d\mu(t) \\
			&\:=\: \dfrac{a^2+b^2}{2ab} \int_{\Om} W_t \,d\mu(t) \otimes \int_{\Om} W_t \,d\mu(t).
	\end{align*}
	Therefore, we arrive at \eqref{eq: Kant ineq for tensor}.
	The best possibility of the constant $(a^2+b^2)/(2ab)$ also comes from Lemma \ref{lem: Lemma 1}.
\end{proof}

As a special case, we obtain a discrete version of the integral inequality \eqref{eq: Kant ineq for tensor}
as follows.

\begin{corollary}
	For each $i=1,2,\dots,n$, let $A_i \in \A$ be a selfadjoint operator such that 
	$\Sp(A_i) \subseteq [a,b] \subseteq (0,\infty)$
	and let $W_i$ be a positive operator in $\A$.
	Then we have
	\begin{align}
		\sum_{i=1}^n W_i^{\frac{1}{2}} A_i W_i^{\frac{1}{2}} \otimes_s \sum_{i=1}^n W_i^{\frac{1}{2}} A_i^{-1} W_i^{\frac{1}{2}}
		\;\leqs\; \Kant \left(\sum_{i=1}^n W_i \right)^{\otimes 2}.
	\end{align}
\end{corollary}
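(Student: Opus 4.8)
The plan is to recognize the discrete sums as Bochner integrals over a finite measure space and then apply Theorem \ref{thm: Kant integral W_t tensor} verbatim. First I would take $\Om = \{1,2,\dots,n\}$ with the discrete topology, so that $\Om$ is a finite and hence locally compact Hausdorff space, and equip it with the counting measure $\mu$, which is a finite Radon measure. Relabeling the index $i$ as the parameter $t$, I regard $(A_i)_{i=1}^n$ as a field $(A_t)_{t \in \Om}$ of selfadjoint operators and $(W_i)_{i=1}^n$ as a field $(W_t)_{t \in \Om}$ of positive operators.

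Next I would check that these fields satisfy the Main hypothesis. Because $\Om$ is discrete, every operator-valued function on $\Om$ is automatically norm continuous, and its norm function is trivially Lebesgue integrable over the finite measure space; so $(A_t)_{t \in \Om}$ is a bounded continuous field. The assumption $\Sp(A_i) \subseteq [a,b] \subseteq (0,\infty)$ is precisely the spectral condition required, and since $[a,b] \subseteq (0,\infty)$ each $A_i$ is in fact strictly positive. The family $(W_t)_{t \in \Om}$ is likewise a continuous field of positive operators. Hence the Main hypothesis holds.

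The key observation is that integration against the counting measure reduces to a finite sum: for any field $(B_t)_{t \in \Om}$ we have $\int_{\Om} B_t \, d\mu(t) = \sum_{i=1}^n B_i$, as is seen by taking the partition of $\Om$ into singletons in the defining net $F_{\PP,\ep}$. Applying this with $B_t = W_t^{1/2} A_t W_t^{1/2}$, with $B_t = W_t^{1/2} A_t^{-1} W_t^{1/2}$, and with $B_t = W_t$ converts the three integrals in \eqref{eq: Kant ineq for tensor} into the three sums appearing in the Corollary. Substituting these identities into the conclusion of Theorem \ref{thm: Kant integral W_t tensor} gives exactly the claimed inequality. Since the whole argument is a pure specialization, I anticipate no genuine obstacle; the only step needing (routine) care is verifying that the Main hypothesis truly holds in the discrete setting, so that the theorem may be quoted directly.
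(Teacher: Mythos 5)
Your proposal is correct and is exactly the paper's argument: the paper's proof is the one-line specialization "take $\Om=\{1,2,\dots,n\}$ with the counting measure in Theorem \ref{thm: Kant integral W_t tensor}." Your additional verifications (that the Main hypothesis holds trivially in the discrete setting and that the Bochner integrals collapse to finite sums) are the routine details the paper leaves implicit.
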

\begin{proof}
	Take $\Om=\{1,2,\dots,n\}$ and set $\mu$ to be the counting measure 
	in Theorem \ref{thm: Kant integral W_t tensor}.
\end{proof}

The next result is an integral inequality of Kantorovich type in which
the weights are scalars.

\begin{corollary} \label{cor: Kant integral w(t) tensor}
	Assume Main hypothesis.	
	For any continuous function $w:\Om \to [0,\infty)$, we have
	\begin{equation} \label{eq: Kant ineq for tensor - weight}
		\int_{\Om} w(t) A_t  \,d \mu(t) \otimes_s \int_{\Om} w(t) A_t^{-1}  \,d \mu(t) 
		\:\leqs\: \Kant \left( \int_{\Om} w(t) \,d\mu(t)\right)^2 I.
	\end{equation}
\end{corollary}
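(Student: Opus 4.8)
The plan is to obtain this scalar-weight inequality as a direct specialization of Theorem \ref{thm: Kant integral W_t tensor}, choosing the operator weights to be scalar multiples of the identity. Concretely, given the continuous function $w : \Om \to [0,\infty)$, I would set $W_t = w(t) I$ for each $t \in \Om$. Since $w$ is continuous and nonnegative, the family $(w(t) I)_{t \in \Om}$ is a continuous field of positive operators, so the Main hypothesis is satisfied with this choice of $(W_t)_{t \in \Om}$, the field $(A_t)_{t\in\Om}$ and the spectral bound $[a,b]$ being left unchanged. Thus Theorem \ref{thm: Kant integral W_t tensor} applies verbatim.

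First I would evaluate the two operators appearing in the symmetrized tensor product on the left of \eqref{eq: Kant ineq for tensor}. Because $W_t^{\frac{1}{2}} = \sqrt{w(t)}\, I$ is a scalar and commutes with everything, each factor collapses:
\begin{align*}
\int_{\Om} W_t^{\frac{1}{2}} A_t W_t^{\frac{1}{2}} \,d\mu(t) = \int_{\Om} w(t) A_t \,d\mu(t), \qquad
\IntAweight{A_t^{-1}} = \int_{\Om} w(t) A_t^{-1} \,d\mu(t),
\end{align*}
which are exactly the two operators forming the left-hand side of \eqref{eq: Kant ineq for tensor - weight}.

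For the right-hand side I would use $\int_{\Om} W_t \,d\mu(t) = \left( \int_{\Om} w(t)\,d\mu(t) \right) I$, so that the tensor power simplifies to
\begin{align*}
\left( \int_{\Om} W_t \,d\mu(t) \right)^{\otimes 2} = \left( \int_{\Om} w(t)\,d\mu(t) \right)^2 (I \otimes I) = \left( \int_{\Om} w(t)\,d\mu(t) \right)^2 I,
\end{align*}
the final $I$ being the identity on $\HH \otimes \HH$. Substituting these three evaluations into \eqref{eq: Kant ineq for tensor} yields \eqref{eq: Kant ineq for tensor - weight} at once. There is essentially no analytic obstacle; the only point requiring a moment's care is the bookkeeping of scalar factors through the Hilbert tensor product, namely that $(cI)^{\otimes 2} = c^2 (I \otimes I)$ and that $I \otimes I$ is the identity of $B(\HH \otimes \HH)$, so that the right-hand side correctly reduces to a scalar multiple of the identity rather than retaining an unsimplified tensor power.
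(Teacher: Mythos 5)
Your proof is correct and is exactly the paper's argument: the paper also proves this corollary by setting $W_t = w(t)I$ in Theorem \ref{thm: Kant integral W_t tensor}, simply stating the substitution without writing out the intermediate simplifications. Your additional bookkeeping of the scalar factors through the tensor product is accurate and merely makes explicit what the paper leaves implicit.
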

\begin{proof}
	Set $W_t =w(t)I$ for each $t \in \Om$ in Theorem \ref{thm: Kant integral W_t tensor}.
\end{proof}

The following result is a discrete version of the inequality \eqref{eq: Kant ineq for tensor - weight}.

\begin{corollary} \label{cor: reverse weight AM_HM}
	For each $i=1,2,\dots,n$, let $A_i \in \A$ be a selfadjoint operator such that 
	$\Sp(A_i) \subseteq [a,b] \subseteq (0,\infty)$
	and let $w_i \geqs 0$ be a constant.
	Then 
	\begin{align}
		\left(\sum_{i=1}^n w_i A_i \right)  \otimes_s \left( \sum_{i=1}^n w_i  A_i^{-1} \right) 
		\leqs \Kant \left(\sum_{i=1}^n w_i \right)^2 I.
	\end{align}
\end{corollary}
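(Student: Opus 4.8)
The plan is to obtain this discrete inequality as a direct specialization of the integral inequality \eqref{eq: Kant ineq for tensor - weight} established in Corollary \ref{cor: Kant integral w(t) tensor}. First I would take $\Om = \{1,2,\dots,n\}$ equipped with the counting measure $\mu$, so that $\mu(\{i\}) = 1$ for each $i$. In this discrete setting the field $(A_t)_{t \in \Om}$ becomes the finite family $A_1, \dots, A_n$, and any scalar weight function $w \colon \Om \to [0,\infty)$ reduces to the assignment $i \mapsto w_i$.

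Next I would verify that the Main hypothesis holds. Since $\Om$ is finite and discrete, every function on $\Om$ is automatically norm continuous, so $(A_t)_{t \in \Om}$ is a bounded continuous field and the norm function $t \mapsto \norm{A_t}$ is trivially Lebesgue integrable on $\Om$. The strict positivity of each $A_i$, guaranteed by the spectral containment $\Sp(A_i) \subseteq [a,b] \subseteq (0,\infty)$, is part of the hypothesis, and the weights $w_i$ are nonnegative by assumption. Hence all requirements of Corollary \ref{cor: Kant integral w(t) tensor} are satisfied.

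Finally I would evaluate the Bochner integrals, which collapse into finite sums under the counting measure:
\begin{align*}
	\int_{\Om} w(t) A_t \,d\mu(t) &= \sum_{i=1}^n w_i A_i, \\
	\int_{\Om} w(t) A_t^{-1} \,d\mu(t) &= \sum_{i=1}^n w_i A_i^{-1}, \\
	\int_{\Om} w(t) \,d\mu(t) &= \sum_{i=1}^n w_i.
\end{align*}
Substituting these expressions into \eqref{eq: Kant ineq for tensor - weight} yields precisely the claimed inequality. There is essentially no analytic obstacle in this argument: the only point requiring any attention is the routine confirmation that the discrete data meet the Main hypothesis, after which the conclusion follows immediately from Corollary \ref{cor: Kant integral w(t) tensor}. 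Alternatively, one could apply the same counting-measure specialization directly to Theorem \ref{thm: Kant integral W_t tensor} with the choice $W_i = w_i I$, recovering the scalar-weighted form in one step.
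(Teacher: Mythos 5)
Your proposal is correct and follows exactly the paper's own route: the paper proves this corollary by taking $\Om=\{1,2,\dots,n\}$ with the counting measure in Corollary \ref{cor: Kant integral w(t) tensor}, which is precisely your specialization. Your additional verification that the discrete data satisfy the Main hypothesis is a reasonable elaboration of a step the paper leaves implicit.
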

\begin{proof}
	Take $\Om=\{1,2,\dots,n\}$ and set $\mu$ to be the counting measure 
	in Corollary \ref{cor: Kant integral w(t) tensor}.
\end{proof}

From this corollary, when the weight $w_i$ is $1/n$ for each $i$, then
	\begin{equation} \label{eq: Kant as reverse AM-HM}
		\frac{1}{n} \left( A_1 + A_2 +\dots +A_n \right)  \otimes 
		\frac{1}{n} \left( A_1^{-1} + A_2^{-1} +\dots +A_n^{-1} \right) 
		\leqs \Kant I.
	\end{equation}
Recall that the harmonic mean of $A_1,A_2, \dots, A_n$ is given by
\begin{align*}
	n (A_1^{-1} + A_2^{-1} +\dots +A_n^{-1})^{-1}.
\end{align*}
Hence, Corollary \ref{cor: reverse weight AM_HM} provides a reverse weighted AM-HM inequality.

\section{Kantorovich integral inequalities involving operator means}

In this section, we establish integral analogues of Kantorovich inequality involving operator means.
To begin with, recall some fundamental facts in Kubo-Ando theory of operator means \cite{Kubo-Ando}; 
see also 
\cite[Section 3]{Hiai} and \cite[Chapter 5]{Hiai-Petz}.

\subsection{Preliminaries on operator means}
An \emph{(operator) connection} is a binary operation $\sm$ assigned to each pair of positive operators
such that for all $A,B,C,D \geqs 0$:
\begin{enumerate}
	\item[(M1)] (joint) monotonicity: $A \leqs C, B \leqs D \implies A \sm B \leqs C \sm D$
	\item[(M2)] transformer inequality: $C(A \sm B)C \leqs (CAC) \sm (CBC)$
	\item[(M3)] (joint) continuity from above:  for $A_n,B_n \in B(\HH)^+$,
                if $A_n \downarrow A$ and $B_n \downarrow B$,
                 then $A_n \sm B_n \downarrow A \sm B$.
                 Here, $X_n \downarrow X$ indicates that $(X_n)$ is a decreasing sequence
                 converging strongly to $X$.
\end{enumerate}
Using (M2), every operator connection $\sigma$ is invariant under congruence transformations 
in the sense that
\begin{align}
	C(A \sm B)C \:=\: (CAC) \sm (CBC),  \label{eq: congruent invariance}
\end{align}
for $A,B \geqs 0$ and $C>0$. Moreover, every connection $\sigma$ satisfies
\begin{align}
	(A+B) \sm (C+D) \geqs (A \sm C) + (B \sm D),  \label{eq: mean - concave + pos hom}
\end{align}
for any $A,B,C,D \geqs 0$.

An \emph{operator mean} is a connection $\sigma$ with fixed point property $A \sm A =A$ 
for all $A \geqs 0$.

A major core of Kubo-Ando theory is the one-to-one correspondence between operator connections
and operator monotone functions.
Recall (e.g. \cite[Chapter 4]{Hiai-Petz}) that a continuous
function $f: [0,\infty) \to \R$ is said to be \emph{operator monotone} if
\begin{align*}
	A \leqs B \implies f(A) \leqs f(B) 
\end{align*}
holds for any positive operators $A$ and $B$. 

\begin{proposition} \label{prop: Kubo-Ando}
(\cite[Theorem 3.4]{Kubo-Ando}) 
	Given an operator connection $\sigma$, there is a unique operator monotone function 
	$f:[0,\infty) \to [0,\infty)$
	such that
\begin{equation} 
	f(A) \:=\: I \,\sigma\, A,  \quad A \geqs 0.  \label{eq: f(A) = I sm A}
\end{equation}
In fact, the map $\sigma \mapsto f$ is a bijection. 
\end{proposition}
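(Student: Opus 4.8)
The plan is to construct $f$ explicitly from $\sm$, verify it has the required properties, and then show that the assignment $\sigma \mapsto f$ is both injective and surjective; I expect surjectivity to be where the real depth lies. First I would define a scalar function by declaring $I \sm (tI) = f(t) I$ for $t \geqs 0$. The one point needing justification is that $I \sm (tI)$ really is a scalar multiple of the identity. This follows from the covariance argument behind \eqref{eq: congruent invariance}, applied to a unitary $U$ in place of $C$: since $U$ is invertible one gets the equality $U^*(I \sm tI)U = (U^* I U)\sm(U^*(tI)U) = I \sm (tI)$, so $I \sm (tI)$ commutes with every unitary and is therefore a scalar $f(t)I$, with $f(t) \geqs 0$ because connections preserve positivity. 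The substantive step is then to prove $I \sm A = f(A)$ for \emph{every} $A \geqs 0$, where $f(A)$ is the continuous functional calculus: for $A$ with finite spectrum I would write $A = \bigoplus_j t_j I_{\HH_j}$ in block form and use covariance to get $I \sm A = \bigoplus_j (I_{\HH_j} \sm t_j I_{\HH_j}) = \bigoplus_j f(t_j) I_{\HH_j} = f(A)$, then pass to general $A \geqs 0$ by norm approximation together with the continuity axiom (M3). Since $0 \leqs A \leqs B$ forces $f(A) = I \sm A \leqs I \sm B = f(B)$ by (M1), the function $f$ is operator monotone, which settles existence.

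Uniqueness of $f$ for a fixed $\sm$ is immediate: evaluating \eqref{eq: f(A) = I sm A} at $A = tI$ recovers each value $f(t)$, so any two functions realizing the connection coincide. For injectivity of $\sigma \mapsto f$, suppose two connections $\sigma$ and $\rho$ induce the same $f$. For invertible $A>0$ the congruence invariance \eqref{eq: congruent invariance} gives
\begin{align*}
	A \,\sigma\, B \:=\: A^{1/2}\bigl(I \,\sigma\, A^{-1/2} B A^{-1/2}\bigr)A^{1/2} \:=\: A^{1/2} f\bigl(A^{-1/2} B A^{-1/2}\bigr) A^{1/2},
\end{align*}
and the identical expression holds for $\rho$, so $A \,\sigma\, B = A \,\rho\, B$ whenever $A>0$. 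Letting $A + \ep I \downarrow A$ and invoking (M3) extends this to all $A \geqs 0$, whence $\sigma = \rho$.

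The hard part will be surjectivity. Given an arbitrary operator monotone $f:[0,\infty)\to[0,\infty)$, I would define $A \sm B = A^{1/2} f(A^{-1/2}BA^{-1/2})A^{1/2}$ for $A>0$, extend to $A \geqs 0$ by continuity, and then verify (M1)--(M3). Monotonicity and continuity are comparatively routine, but the transformer inequality (M2) is delicate, and this is where the genuine depth of the theory enters. The clean route is L\"{o}wner's integral representation: every operator monotone $f$ on $[0,\infty)$ admits
\begin{align*}
	f(t) \:=\: \ap + \bt t + \int_{(0,\infty)} \frac{t(1+s)}{t+s}\,d\nu(s), \qquad \ap,\bt \geqs 0,\ \nu \geqs 0.
\end{align*}
The constant term, the linear term, and each kernel $t \mapsto t(1+s)/(t+s)$ correspond respectively to the trivial connections $A \sm B = \ap A$ and $A \sm B = \bt B$ and to scalar multiples of weighted harmonic (parallel) means, each of which plainly satisfies (M1)--(M3); since these three axioms are preserved under nonnegative linear combinations and monotone integration, the constructed $\sm$ is a connection, and by construction $I \sm A = f(A)$. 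Thus $\sigma \mapsto f$ is onto, completing the proof. I would flag this final step as the main obstacle: verifying the transformer inequality for the constructed operation is, via the representation above, essentially equivalent to the full strength of L\"{o}wner's theorem on operator monotone functions.
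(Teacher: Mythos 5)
The paper gives no proof of this proposition: it is quoted directly from Kubo--Ando \cite[Theorem 3.4]{Kubo-Ando}, so there is no internal argument to compare yours against. Your sketch reproduces the standard proof from that source --- define $f$ by $f(t)I = I \sm (tI)$, extend to $f(A) = I \sm A$ via finite-spectrum approximation and (M3), obtain injectivity from congruence invariance, and obtain surjectivity from L\"{o}wner's integral representation by realizing $\sm$ as a positive combination of the trivial connections and weighted harmonic means --- and you correctly locate the genuine depth in the surjectivity step.

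One step does not follow from the axioms as you invoke them. The congruence invariance \eqref{eq: congruent invariance} is stated only for $C>0$, and (M2) only for positive $C$; a unitary $U$ is not positive, so the identity $U^*(I \sm tI)U = I \sm (tI)$ is not licensed by \eqref{eq: congruent invariance}. (Unitary covariance of a connection is true, but it is a consequence of the theory rather than an axiom.) The standard repair stays within the stated axioms: for a projection $P$, apply (M2) with $C=P$ and with $C=I-P$, add, and use the superadditivity \eqref{eq: mean - concave + pos hom} to get
\begin{align*}
	PXP + (I-P)X(I-P) \:\leqs\: P \sm (tP) + (I-P) \sm \bigl(t(I-P)\bigr) \:\leqs\: I \sm (tI) \:=\: X,
\end{align*}
where $X = I \sm (tI)$. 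The left side equals $\tfrac{1}{2}(X + SXS)$ for the symmetry $S = 2P-I$, so $SXS \leqs X$; conjugating once more by $S$ gives the reverse inequality, hence $SXS=X$ and $X$ commutes with every projection, so $X$ is a scalar. The same device yields the direct-sum decomposition you need for finite-spectrum $A$. With that substitution, your outline is a faithful rendering of Kubo--Ando's argument.
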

Such a function $f$ is called the \emph{representing function} of $\sigma$. 
	

\begin{lemma}[\cite{Arlinskii}] \label{lem: norm A sm B}
	Let $\sigma$ be an operator connection. Then for all positive operators $A$ and $B$ in $B(\HH)$,
	we have
	\begin{align*}
		\norm{A \,\sigma\, B} \leqs \norm{A} \,\sigma\, \norm{B}.
	\end{align*}
	Here, the connection $\sigma$ on the right hand side is the induced connection on $[0,\infty)$ defined
	by $(a \,\sigma\, b)I = aI \,\sigma\, bI$ for any $a,b \geqs 0$.
\end{lemma}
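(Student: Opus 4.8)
The plan is to reduce the operator norm estimate to the induced scalar connection by bounding each argument above and then invoking joint monotonicity. First I would record the elementary spectral fact that every positive operator satisfies $A \leqs \norm{A} I$: since $\Sp(A) \subseteq [0,\norm{A}]$, functional calculus gives $0 \leqs A \leqs \norm{A} I$, and likewise $0 \leqs B \leqs \norm{B} I$.

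Next, applying the joint monotonicity property (M1) to these two estimates simultaneously yields $A \sm B \leqs (\norm{A} I) \sm (\norm{B} I)$. By the definition of the induced connection on $[0,\infty)$ stated in the lemma, namely $(a \sm b)I = aI \sm bI$, the right-hand side is precisely $(\norm{A} \sm \norm{B}) I$. Combining these gives the key operator inequality
\[
	A \sm B \:\leqs\: (\norm{A} \sm \norm{B}) \, I .
\]

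To finish, I would use that $A \sm B$ is itself a positive operator—a basic consequence of the Kubo-Ando correspondence (Proposition \ref{prop: Kubo-Ando}), since the representing function $f$ maps $[0,\infty)$ into $[0,\infty)$—together with the standard fact that a positive operator $C$ satisfying $C \leqs \ld I$ obeys $\norm{C} \leqs \ld$, as its spectrum then lies in $[0,\ld]$. Taking $C = A \sm B$ and $\ld = \norm{A} \sm \norm{B}$ produces the claimed bound $\norm{A \sm B} \leqs \norm{A} \sm \norm{B}$. The only point requiring a little care is confirming that the scalar $\norm{A} \sm \norm{B}$ is well-defined and nonnegative, but this is exactly what the induced-connection definition guarantees; with that in hand, each step is a direct application of one of the stated axioms, so no genuine obstacle arises.
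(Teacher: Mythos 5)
Your argument is correct. Note, however, that the paper does not prove this lemma at all: it is stated with a citation to Arlinskii and used as a black box, so there is no in-paper proof to compare against. Your three steps --- the spectral bound $A \leqs \norm{A} I$, joint monotonicity (M1) applied to both arguments together with the defining identity $(a \,\sigma\, b)I = aI \,\sigma\, bI$ of the induced scalar connection, and the observation that a positive operator dominated by $\ld I$ has norm at most $\ld$ --- constitute the standard short proof of this fact, and each step is justified by an axiom or definition already present in the paper. The only point worth stating slightly more carefully is the positivity of $A \,\sigma\, B$: you attribute it to the representing function, which works, but it is even more immediate from the paper's definition of a connection as a binary operation on the cone of positive operators (so that its output is positive by fiat). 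In short, you have supplied a complete, self-contained proof for a statement the paper merely imports.
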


We say that a function $f: [0,\infty) \to \R$ is \emph{super-multiplicative}
if $f(xy) \geqs f(x)f(y)$ for all $x,y \geqs 0$.

\begin{lemma} \label{lem: f is super multiplicative}
	Let $\sigma$ be an operator connection associated with an operator monotone function 
	$f:[0,\infty) \to [0,\infty)$.
	If $f$ is super-multiplicative, then
	\begin{align}
		(A \,\sigma\, C) \otimes_s (B \,\sigma\,D) \leqs (A \otimes_s B) \,\sigma\,(C \otimes_s D)
		\label{eq: A sm C otimes B sm D}
	\end{align}
	for all positive operators $A,B,C,D$.
\end{lemma}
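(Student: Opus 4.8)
The plan is to reduce the symmetrized inequality \eqref{eq: A sm C otimes B sm D} to the analogous statement for the ordinary tensor product, prove that statement by spectral calculus, and then recover the symmetrized form from the superadditivity property \eqref{eq: mean - concave + pos hom} together with positive homogeneity.

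First I would prove the auxiliary inequality
\[
	(A \sm C) \otimes (B \sm D) \:\leqs\: (A \otimes B) \sm (C \otimes D)
\]
for the ordinary tensor product. Assume for the moment that $A$ and $B$ are invertible. Combining \eqref{eq: congruent invariance} and \eqref{eq: f(A) = I sm A} gives the representation $A \sm C = A^{1/2} f(A^{-1/2} C A^{-1/2}) A^{1/2}$, and likewise for the remaining connections. Putting $P = A^{-1/2} C A^{-1/2}$ and $Q = B^{-1/2} D B^{-1/2}$ and using $(A \otimes B)^{1/2} = A^{1/2} \otimes B^{1/2}$, both sides of the auxiliary inequality become the congruence by the invertible operator $A^{1/2} \otimes B^{1/2}$ of $f(P) \otimes f(Q)$ and of $f(P \otimes Q)$ respectively. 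Hence, after stripping off this common congruence, it suffices to show
\[
	f(P) \otimes f(Q) \:\leqs\: f(P \otimes Q), \qquad P, Q \geqs 0.
\]
For this I would use the joint spectral measure $E$ of the commuting positive operators $P \otimes I$ and $I \otimes Q$ on $\HH \otimes \HH$: one has $f(P) \otimes f(Q) = \int f(\ld) f(\mu) \, dE(\ld,\mu)$ while $f(P \otimes Q) = \int f(\ld\mu) \, dE(\ld,\mu)$, so their difference equals $\int \bigl( f(\ld\mu) - f(\ld) f(\mu) \bigr) \, dE(\ld,\mu) \geqs 0$, the integrand being nonnegative precisely because $f$ is super-multiplicative. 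The non-invertible case then follows by applying the result to $A + \ep I$ and $B + \ep I$ and letting $\ep \downarrow 0$, invoking the continuity axiom (M3).

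With the auxiliary inequality established, interchanging the roles $A \leftrightarrow B$ and $C \leftrightarrow D$ gives $(B \sm D) \otimes (A \sm C) \leqs (B \otimes A) \sm (D \otimes C)$. Averaging the two inequalities yields
\[
	(A \sm C) \otimes_s (B \sm D) \:\leqs\: \tfrac{1}{2} \bigl[ (A \otimes B) \sm (C \otimes D) + (B \otimes A) \sm (D \otimes C) \bigr].
\]
To finish, I would apply the superadditivity property \eqref{eq: mean - concave + pos hom} along the splittings $A \otimes B + B \otimes A = 2 (A \otimes_s B)$ and $C \otimes D + D \otimes C = 2 (C \otimes_s D)$, combined with the positive homogeneity $(2X) \sm (2Y) = 2 (X \sm Y)$ (which is \eqref{eq: congruent invariance} with transforming operator $\sqrt{2}\, I$), to obtain
\[
	(A \otimes_s B) \sm (C \otimes_s D) \:\geqs\: \tfrac{1}{2} \bigl[ (A \otimes B) \sm (C \otimes D) + (B \otimes A) \sm (D \otimes C) \bigr].
\]
Chaining the last two displays gives \eqref{eq: A sm C otimes B sm D}.

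The main obstacle I anticipate is the auxiliary step $f(P) \otimes f(Q) \leqs f(P \otimes Q)$, since this is the only place where super-multiplicativity genuinely enters, and it requires the joint functional calculus of the commuting operators $P \otimes I$ and $I \otimes Q$ rather than the naive common-eigenbasis diagonalization valid only in finite dimensions. Some additional care is also needed in the limiting argument for non-invertible $A, B$, where one must verify that the tensor products and the connection respect the strong convergence supplied by (M3).
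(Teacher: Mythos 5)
Your proposal is correct and follows essentially the same route as the paper's proof: congruence reduction via $X=A^{-1/2}CA^{-1/2}$, $Y=B^{-1/2}DB^{-1/2}$ to the inequality $f(X)\otimes f(Y)\leqs f(X\otimes Y)$, followed by symmetrization using the superadditivity property \eqref{eq: mean - concave + pos hom} and positive homogeneity. The only difference is that you spell out the joint-spectral-measure justification of $f(P)\otimes f(Q)\leqs f(P\otimes Q)$ and the $\ep$-perturbation for non-invertible $A,B$, both of which the paper leaves implicit.
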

\begin{proof}
	By a continuity argument using $(M1)$ and $(M3)$, we may assume that $A$ and $B$ are strictly positive.
	Putting $X = A^{-\frac{1}{2}} C  A^{-\frac{1}{2}} $ and $Y = B^{-\frac{1}{2}} D  B^{-\frac{1}{2}} $
	yields
	\begin{align*}
		(A \,\sigma\, C) \otimes (B \,\sigma\,D)
		&= (A \otimes B)^{\frac{1}{2}} [(I \,\sigma\, X)\otimes(I \,\sigma\, Y)] (A \otimes B)^{\frac{1}{2}}\\
		&= (A \otimes B)^{\frac{1}{2}} [f(X) \otimes f(Y)] (A \otimes B)^{\frac{1}{2}}\\
		&\leqs (A \otimes B)^{\frac{1}{2}} [f(X \otimes Y)] (A \otimes B)^{\frac{1}{2}}\\
		&= (A \otimes B)^{\frac{1}{2}} [I \,\sigma\, (X \otimes Y)] (A \otimes B)^{\frac{1}{2}}\\
		&= (A \otimes B) \,\sigma\,(C \otimes D).
	\end{align*}
	Here, we use the congruent invariance 
	\eqref{eq: congruent invariance} and the property \eqref{eq: f(A) = I sm A}.
	Now, 
	\begin{align*}
		(A \,\sigma\, C) &\otimes (B \,\sigma\, D) + (B \,\sigma\, D) \otimes (A \,\sigma\, C) \\
			\,&\leqs\, (A \,\otimes\, B) \,\sigma\, (C \,\otimes\, D) 
				+ (B \,\otimes\, A) \,\sigma\, (D \,\otimes\, C) \\
			\,&\leqs\, \left[(A \,\otimes\, B) + (B \,\otimes\, A) \right] \,\sigma\,
				 \left[(C \,\otimes\, D) + (D \,\otimes\, C)\right]. 
	\end{align*}
	Hence, we obtain \eqref{eq: A sm C otimes B sm D}.
\end{proof}

\subsection{Kantorovich type integral inequalities
involving operator means}
The following result can be regarded as a Kantorovich type integral inequality
concerning operator means.

\begin{theorem} \label{thm: Kant - mean}
	Assume Main hypothesis. Let $(B_t)_{t \in \Omega}$ be a bounded continuous field of strictly positive operators such that $\Sp(B_t) \subseteq [a,b]$ for each $t \in \Omega$. 
	Let $\sigma$ be an operator mean with a super-multiplicative representing function. 
	Then 
		\begin{equation} \label{eq: Kant ineq tensor - mean}
	\begin{split}
		\int_{\Om} W_t^{\frac{1}{2}} (A_t \,\sigma\, B_t) W_t^{\frac{1}{2}} \,d \mu(t) 
		\:&\otimes_s\: 
		\IntAweight{(A_t^{-1} \,\sigma\, B_t^{-1})} \\
		\:&\leqs\: \Kant \left( \int_{\Om} W_t \,d\mu(t)  \right)^{\otimes 2}.
	\end{split}
	\end{equation}
\end{theorem}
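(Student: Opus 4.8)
The plan is to follow the double-integral strategy from the proof of Theorem~\ref{thm: Kant integral W_t tensor}, replacing the single application of Lemma~\ref{lem: Lemma 1} there by a chain that combines Lemma~\ref{lem: f is super multiplicative}, the superadditivity~\eqref{eq: mean - concave + pos hom}, and Lemma~\ref{lem: Lemma 1}. First I would confirm that the two integrals
\begin{align*}
	P \:=\: \IntAweight{(A_t \sm B_t)}
	\quad\text{and}\quad
	Q \:=\: \IntAweight{(A_t^{-1} \sm B_t^{-1})}
\end{align*}
exist as Bochner integrals. Since $t \mapsto A_t$ and $t \mapsto B_t$ are norm continuous and an operator mean is jointly norm continuous on strictly positive operators, the fields $t \mapsto A_t \sm B_t$ and $t \mapsto A_t^{-1} \sm B_t^{-1}$ are continuous, while Lemma~\ref{lem: norm A sm B} gives the uniform bounds $\norm{A_t \sm B_t} \leqs \norm{A_t} \sm \norm{B_t} \leqs b$ and $\norm{A_t^{-1} \sm B_t^{-1}} \leqs a^{-1}$; the same reasoning as in Proposition~\ref{prop: Bochner integralability} then shows the relevant norm functions are Lebesgue integrable over the finite measure space $(\Om,\mu)$.

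Next I would expand $2(P \otimes_s Q) = P \otimes Q + Q \otimes P$ as a double integral over $\Om^2$ by means of \eqref{eq: int and tensor product}, just as $X \otimes_s Y$ was treated in Theorem~\ref{thm: Kant integral W_t tensor}. Writing $V = W_t^{\frac{1}{2}} \otimes W_r^{\frac{1}{2}}$ and using the identity $(C_1 X C_1) \otimes (C_2 Y C_2) = (C_1 \otimes C_2)(X \otimes Y)(C_1 \otimes C_2)$, the integrand takes the form $V R_{t,r} V$, where
\begin{align*}
	R_{t,r} \:=\: (A_t \sm B_t) \otimes (A_r^{-1} \sm B_r^{-1}) + (A_t^{-1} \sm B_t^{-1}) \otimes (A_r \sm B_r).
\end{align*}
Everything then reduces to the pointwise estimate $R_{t,r} \leqs \tfrac{a^2+b^2}{ab} I$: granting it, $V R_{t,r} V \leqs \tfrac{a^2+b^2}{ab}(W_t \otimes W_r)$, and integrating over $\Om^2$ and applying \eqref{eq: int and tensor product} twice yields $2(P \otimes_s Q) \leqs \tfrac{a^2+b^2}{ab}\bigl(\int_\Om W_t\,d\mu\bigr)^{\otimes 2}$, so that dividing by $2$ gives \eqref{eq: Kant ineq tensor - mean}.

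To prove the pointwise estimate I would argue
\begin{align*}
	R_{t,r} \:&\leqs\: (A_t \otimes A_r^{-1}) \sm (B_t \otimes B_r^{-1}) + (A_t^{-1} \otimes A_r) \sm (B_t^{-1} \otimes B_r) \\
	\:&\leqs\: \bigl[ A_t \otimes A_r^{-1} + A_t^{-1} \otimes A_r \bigr] \sm \bigl[ B_t \otimes B_r^{-1} + B_t^{-1} \otimes B_r \bigr] \\
	\:&\leqs\: \tfrac{a^2+b^2}{ab} I \sm \tfrac{a^2+b^2}{ab} I \:=\: \tfrac{a^2+b^2}{ab} I.
\end{align*}
The first inequality applies, to each of the two summands separately, the non-symmetrized estimate $(A \sm C) \otimes (B \sm D) \leqs (A \otimes B) \sm (C \otimes D)$ that is derived inside the proof of Lemma~\ref{lem: f is super multiplicative}; this is precisely where super-multiplicativity of the representing function is used. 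The second inequality is the superadditivity~\eqref{eq: mean - concave + pos hom}. The third step invokes Lemma~\ref{lem: Lemma 1} twice to bound each bracketed operator by $\tfrac{a^2+b^2}{ab} I$, and then uses monotonicity~(M1) together with the fixed-point property $cI \sm cI = cI$ of an operator mean.

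The main obstacle, and the reason this does not reduce directly to Theorem~\ref{thm: Kant integral W_t tensor}, is that in general $(A_t \sm B_t)^{-1} \neq A_t^{-1} \sm B_t^{-1}$, so one cannot simply substitute the field $A_t \sm B_t$ into the earlier theorem. The delicate point is to keep the mixed operator $R_{t,r}$ intact and route it, through the termwise form of Lemma~\ref{lem: f is super multiplicative} and superadditivity, into exactly the pairing $A_t \otimes A_r^{-1} + A_t^{-1} \otimes A_r$ that Lemma~\ref{lem: Lemma 1} controls with the sharp constant $\tfrac{a^2+b^2}{ab}$; bounding each tensor factor such as $A_t \otimes A_r^{-1}$ on its own would only give $b/a$ and would destroy the Kantorovich constant.
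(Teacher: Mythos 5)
Your proof is correct, but it takes a genuinely different route from the paper's. The paper works at the level of the already-integrated operators: it first applies the transformer inequality (M2) to move each weight $W_t^{\frac{1}{2}}\cdot W_t^{\frac{1}{2}}$ inside $\sm$, then uses the superadditivity \eqref{eq: mean - concave + pos hom} (implicitly passed to the integral via approximation by finite sums) to pull $\sm$ outside the two integrals, then applies Lemma \ref{lem: f is super multiplicative} to the four integrated quantities, and finally cites Theorem \ref{thm: Kant integral W_t tensor} twice as a black box before closing with monotonicity and the fixed-point property. You instead re-enter the double-integral mechanism of Theorem \ref{thm: Kant integral W_t tensor} and reduce everything to the single pointwise estimate $R_{t,r} \leqs \frac{a^2+b^2}{ab} I$, proved by the termwise (non-symmetrized) form of Lemma \ref{lem: f is super multiplicative}, then \eqref{eq: mean - concave + pos hom}, then Lemma \ref{lem: Lemma 1} applied to the $A$'s and the $B$'s separately. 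Your route buys two things: the weights are factored out by the exact conjugation identity $(C_1 X C_1)\otimes(C_2 Y C_2)=(C_1\otimes C_2)(X\otimes Y)(C_1\otimes C_2)$, so you never need (M2); and superadditivity is only invoked pointwise, avoiding the paper's unstated limiting argument for $\int (X_t \sm Y_t)\,d\mu \leqs (\int X_t\,d\mu) \sm (\int Y_t\,d\mu)$. The paper's route is more modular and shorter on the page, since it reuses Theorem \ref{thm: Kant integral W_t tensor} rather than reproving its integration step. Your closing remark correctly identifies why no direct substitution into Theorem \ref{thm: Kant integral W_t tensor} is possible (in general $(A \sm B)^{-1} \neq A^{-1} \sm B^{-1}$) and why the pairing $A_t \otimes A_r^{-1} + A_t^{-1}\otimes A_r$ must be preserved to retain the sharp constant; both proofs yield the same constant $\Kant$.
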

\begin{proof}
	The function $t \mapsto W_t^{\frac{1}{2}} (A_t \,\sigma\, B_t) W_t^{\frac{1}{2}}$ is Bochner integrable
	due to the norm estimate in Lemma \ref{lem: norm A sm B}.
	It follows that
	\begin{align*}
		&\int_{\Om} W_t^{\frac{1}{2}} (A_t \sigma B_t) W_t^{\frac{1}{2}}  \,d\mu(t)
			\otimes_s \int_{\Om} W_t^{\frac{1}{2}} (A_t^{-1} \, 
			\sigma\, B_t^{-1}) W_t^{\frac{1}{2}} \,d\mu(t) \\
		&\leqs \int_{\Om} \left( W_t^{\frac{1}{2}} A_t W_t^{\frac{1}{2}} \,\sigma\, 
			W_t^{\frac{1}{2}} B_t W_t^{\frac{1}{2}} \right) \,d\mu(t) 
			\otimes_s \int_{\Om} \left( W_t^{\frac{1}{2}} A_t^{-1} W_t^{\frac{1}{2}} \,\sigma\, 
			W_t^{\frac{1}{2}} B_t^{-1} W_t^{\frac{1}{2}} \right) \,d\mu(t) \\
		&\qquad \text{(since } \sigma \text{ satisfies the transformer inequality (M2))} \\
		&\leqs \left[\IntWeightT{A_t} \,\sigma\, \IntWeightT{B_t} \right] \\
			&\qquad \otimes_s \left[\IntWeightT{A_t^{-1}} \,\sigma\, \IntWeightT{B_t^{-1}} \right] \\
			&\qquad \text{(since } \sigma \text{ satisfies the property 
			\eqref{eq: mean - concave + pos hom})} \\
		&\leqs \left[\IntWeightT{A_t} \otimes_s \IntWeightT{A_t^{-1}} \right] \\
			&\qquad \sigma\, \left[\IntWeightT{B_t} \otimes_s \IntWeightT{B_t^{-1}} \right] 
			\qquad \text{(by Lemma \ref{lem: f is super multiplicative})} \\
		&\leqs  \Kant \left( \int_{\Om} W_t \,d\mu(t)\right)^{\otimes 2}  
			\;\sigma\;  \Kant \left( \int_{\Om} W_t \,d\mu(t)\right)^{\otimes 2}   
			\qquad \text{(by Theorem \ref{eq: Kant ineq for tensor})} \\
		&= \Kant \left(\int_{\Om} W_t \,d\mu(t) \right)^{\otimes 2}
			\quad \text{(since } \sigma \text{ satisfies the fixed point property)}.
	\end{align*}
	The proof is complete.
\end{proof}

Theorem \ref{thm: Kant - mean} can be reduced to Theorem \ref{thm: Kant integral W_t tensor}
by setting $A_t = B_t$ for all $t \in \Om$.
The next result is discrete version of the inequality \eqref{eq: Kant ineq tensor - mean}.

\begin{corollary}
	For each $i=1,2,\dots,n$, let $A_i, B_i \in \A$ be selfadjoint operators such that 
	$\Sp(A_i), \Sp(B_i) \subseteq [a,b] \subseteq (0,\infty)$
	and let $W_i$ be a positive operator in $\A$.
	Then we have
	\begin{align}
		\sum_{i=1}^n W_i^{\frac{1}{2}} (A_i \sm B_i) W_i^{\frac{1}{2}} \otimes_s 
		\sum_{i=1}^n W_i^{\frac{1}{2}} (A_i^{-1} \sm B_i^{-1}) W_i^{\frac{1}{2}}
		\leqs \Kant \left(\sum_{i=1}^n W_i \right)^{\otimes 2}.
	\end{align}
\end{corollary}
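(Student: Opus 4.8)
The plan is to obtain this corollary as a direct specialization of Theorem \ref{thm: Kant - mean}, exactly as the earlier discrete corollaries were derived from their integral counterparts. I would take $\Om = \{1, 2, \dots, n\}$ equipped with the counting measure $\mu$, so that $\mu(\Om) = n < \infty$ and $(\Om, \mu)$ is a finite measure space as demanded by the Main hypothesis. The whole argument then reduces to checking that the discrete configuration meets the standing assumptions and then reading off the resulting sums.

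First I would verify that the hypotheses of Theorem \ref{thm: Kant - mean} hold in this finite setting. Since $\Om$ carries the discrete topology, every family $(A_i)$, $(B_i)$, $(W_i)$ is automatically norm continuous, and the norm functions are trivially Lebesgue integrable over a finite index set. The assumptions $\Sp(A_i), \Sp(B_i) \subseteq [a,b] \subseteq (0,\infty)$ guarantee that $(A_i)$ and $(B_i)$ are bounded fields of strictly positive operators, and in particular supply the extra spectral condition required of the second field in Theorem \ref{thm: Kant - mean}; each $W_i$ is positive by hypothesis. The operator mean $\sigma$ with super-multiplicative representing function is provided directly, so all the ingredients of the integral theorem are in place.

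With the counting measure, each Bochner integral collapses to a finite sum: $\int_{\Om} W_t^{1/2}(A_t \sm B_t) W_t^{1/2} \, d\mu(t) = \sum_{i=1}^n W_i^{1/2}(A_i \sm B_i) W_i^{1/2}$, and likewise for the integral of $W_t^{1/2}(A_t^{-1} \sm B_t^{-1}) W_t^{1/2}$ as well as for $\int_{\Om} W_t \, d\mu(t) = \sum_{i=1}^n W_i$. Substituting these identifications into \eqref{eq: Kant ineq tensor - mean} yields precisely the claimed inequality. I do not anticipate any genuine obstacle here; the only point worth flagging is the verification that the discrete data really falls under the Main hypothesis, which is immediate once one observes that the continuity and integrability requirements are vacuous over a finite index set.
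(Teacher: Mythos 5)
Your proposal is correct and coincides with the paper's own proof, which simply takes $\Omega=\{1,2,\dots,n\}$ with the counting measure in Theorem \ref{thm: Kant - mean}. Your additional remarks verifying that continuity and integrability are automatic over a finite discrete index set are a sound (if routine) elaboration of the same specialization.
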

\begin{proof}
	Take $\Om=\{1,2,\dots,n\}$ and set $\mu$ to be the counting measure 
	in Theorem \ref{thm: Kant - mean}.
\end{proof}



\section{Further operator integral inequalities}

Theorem \ref{eq: Kant ineq for tensor} can be extended
in the following way: 

\begin{theorem} \label{thm: Kant ineq tensor f(x) f(x-1)}
	Assume Main hypothesis. Let $f$ be a continuous real-valued function defined on 
	$[a,b] \cup [1/b, 1/a]$ such that $f(x)f(1/x) \leqs 1$ for all $x \in [a,b]$.
	Suppose that $f([a,b]) \subseteq [a,b]$ or $f([a,b]) \subseteq [1/b, 1/a]$. 
	Then 
	\begin{equation} \label{eq: Kant ineq for tensor - f}
	\begin{split}
		\int_{\Om} W_t^{\frac{1}{2}} f(A_t) W_t^{\frac{1}{2}} \,d \mu(t) \:&\otimes_s\: \IntAweight{f(A_t^{-1})} \\
		&\leqs \Kant \left( \int_{\Om} W_t \,d\mu(t) \right)^{\otimes 2}.
	\end{split}
	\end{equation}
\end{theorem}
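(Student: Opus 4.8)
The plan is to run the proof of Theorem \ref{thm: Kant integral W_t tensor} essentially verbatim, with the identity function $x \mapsto x$ replaced by $f$, so that the whole argument collapses onto a single two-variable operator estimate generalizing Lemma \ref{lem: Lemma 1}. The first and decisive step is to isolate the underlying scalar inequality
\[
	f(x)\, f(1/y) + f(1/x)\, f(y) \:\leqs\: \frac{a^2+b^2}{ab}, \qquad x,y \in [a,b].
\]
To prove it I would split into the two alternatives permitted by the hypothesis. If $f([a,b]) \subseteq [a,b]$, then $f(x), f(y) \in [a,b]$ are strictly positive, while the constraint $f(x)f(1/x) \leqs 1$ gives $f(1/x) \leqs 1/f(x)$ and likewise $f(1/y) \leqs 1/f(y)$; multiplying these by the positive quantities $f(x)$ and $f(y)$ yields $f(x)f(1/y) + f(1/x)f(y) \leqs f(x)/f(y) + f(y)/f(x)$, and since $f(x)/f(y) \in [a/b, b/a]$ the right-hand side is at most $(a^2+b^2)/(ab)$, exactly as in the scalar step of Lemma \ref{lem: Lemma 1}. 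If instead $f([a,b]) \subseteq [1/b,1/a]$, the identical manipulation applies with $f(x), f(y) \in [1/b,1/a]$, and the ratio $f(x)/f(y)$ again lies in $[a/b,b/a]$, so the same Kantorovich constant results.

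Next I would lift this to the operator analogue of Lemma \ref{lem: Lemma 1}: for selfadjoint $A,B$ with $\Sp(A), \Sp(B) \subseteq [a,b]$,
\[
	f(A) \otimes f(B^{-1}) + f(A^{-1}) \otimes f(B) \:\leqs\: \frac{a^2+b^2}{ab}\, I.
\]
Here I cannot simply recycle the norm estimate of Lemma \ref{lem: Lemma 1}, because the hypotheses constrain $f$ only through the product condition and do not control its sign on $[1/b,1/a]$; thus $f(B^{-1})$ may fail to be positive and the individual tensor summands need not be positive operators. Instead I would use that $A \otimes I$ and $I \otimes B$ are commuting selfadjoint operators, hence generate a commutative C$^*$-algebra carrying a joint continuous functional calculus. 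Under the resulting identification the left-hand side is the image of the continuous function $(\ld,\mu) \mapsto f(\ld)f(1/\mu) + f(1/\ld)f(\mu)$ on $\Sp(A) \times \Sp(B) \subseteq [a,b]^2$, and the pointwise scalar bound above forces the operator inequality.

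With this estimate in hand the remainder follows the earlier proof line by line. Setting $X = \int_{\Om} W_t^{\frac{1}{2}} f(A_t) W_t^{\frac{1}{2}} \,d\mu(t)$ and $Y = \int_{\Om} W_t^{\frac{1}{2}} f(A_t^{-1}) W_t^{\frac{1}{2}} \,d\mu(t)$, both Bochner integrable by Proposition \ref{prop: Bochner integralability}, I would invoke \eqref{eq: int and tensor product} to expand $2(X \otimes_s Y)$ as the double integral over $\Om^2$ of
\[
	(W_t \otimes W_r)^{\frac{1}{2}} \bigl[ f(A_t) \otimes f(A_r^{-1}) + f(A_t^{-1}) \otimes f(A_r) \bigr] (W_t \otimes W_r)^{\frac{1}{2}},
\]
then replace the bracketed factor by $(a^2+b^2)/(ab)\, I$ via the operator estimate and integrate, arriving at \eqref{eq: Kant ineq for tensor - f}.

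I expect the scalar inequality to be the genuine crux. The case hypothesis $f([a,b]) \subseteq [a,b]$ or $f([a,b]) \subseteq [1/b,1/a]$ is precisely what confines $f(x)/f(y)$ to $[a/b,b/a]$ so the Kantorovich constant is not exceeded, and the careful sign bookkeeping, needed because $f$ may be negative on $[1/b,1/a]$, is the delicate point. By contrast, the lifting step is a routine application of commutative functional calculus, and the integral assembly is structurally identical to Theorem \ref{thm: Kant integral W_t tensor}.
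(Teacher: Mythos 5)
Your proof is correct, but it follows a genuinely different route from the paper's. The paper disposes of the theorem in three lines: since the range hypothesis forces $f>0$ on $[a,b]$, the condition $f(x)f(1/x)\leqs 1$ gives $f(A_t^{-1})\leqs f(A_t)^{-1}$ by single-operator functional calculus; monotonicity of $\otimes_s$ against the positive first factor then reduces everything to Theorem \ref{thm: Kant integral W_t tensor} applied to the field $(f(A_t))_{t\in\Om}$, whose spectra lie in $[a,b]$ or in $[1/b,1/a]$ --- and the Kantorovich constant for $[1/b,1/a]$ coincides with that for $[a,b]$, which is why the paper can say the constant ``is not affected.'' You instead rebuild the whole argument from the two-variable scalar bound $f(x)f(1/y)+f(1/x)f(y)\leqs (a^2+b^2)/(ab)$, lift it through the joint functional calculus of the commuting operators $A\otimes I$ and $I\otimes B$, and repeat the double-integral expansion of Theorem \ref{thm: Kant integral W_t tensor}. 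Your version is longer but self-contained, and your concern about the sign of $f$ on $[1/b,1/a]$ is legitimate: it is precisely what the paper's reduction sidesteps by replacing $f(A_t^{-1})$ with the manifestly positive $f(A_t)^{-1}$ before any tensor product is formed. As a side benefit, your functional-calculus lifting is a more robust mechanism than the norm estimate used to prove Lemma \ref{lem: Lemma 1} (which tacitly identifies $\norm{B^{-1}}$ with $\norm{B}^{-1}$); the price is that you duplicate the integral machinery rather than citing the earlier theorem.
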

\begin{proof}
	Since $\Sp(A_t^{-1}) \subseteq [1/b, 1/a]$ for each $t$, 
	the function $t \mapsto W_t^{\frac{1}{2}} f(A_t^{-1}) W_t^{\frac{1}{2}}$ 
	is Bochner integrable by Proposition \ref{prop: Bochner integralability}.
	The assumption also implies that $f(A_t^{-1}) \leqs f(A_t)^{-1}$ for each $t \in \Om$.
	The desired result now follows from Theorem \ref{thm: Kant integral W_t tensor}.
	Note that the constant $(a^2+b^2)/(2ab)$ is not affected.
\end{proof}

Theorem \ref{thm: Kant ineq tensor f(x) f(x-1)} 
is reduced to Theorem \ref{thm: Kant integral W_t tensor} by setting $f(x)=x$ or $f(x)=1/x$.


\begin{corollary} \label{cor: further ineq}
	Assume the hypothesis of Theorem \ref{thm: Kant ineq tensor f(x) f(x-1)}.  
	For any continuous function $g:[a,b] \to [0,\infty)$, we have 
	\begin{equation}  
		\int_{\Om}  f(A_t) g(A_t) \,d \mu(t) \:\otimes_s\: \int_{\Om}  f(A_t^{-1}) g(A_t) \,d \mu(t) 
		\:\leqs\: \Kant \left( \int_{\Om} g(A_t) \,d\mu(t) \right)^{\otimes 2}.
	\end{equation}
\end{corollary}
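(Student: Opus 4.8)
The plan is to apply Theorem \ref{thm: Kant ineq tensor f(x) f(x-1)} directly, with the weight field specialized to $W_t = g(A_t)$. First I would check that this is an admissible choice of weights. Since $g : [a,b] \to [0,\infty)$ is continuous, functional calculus produces a positive operator $g(A_t)$ for each $t \in \Om$, and the field $t \mapsto g(A_t)$ is norm continuous because $t \mapsto A_t$ is norm continuous and $g$ is continuous on the common spectral interval $[a,b]$. Hence $(g(A_t))_{t \in \Om}$ is a continuous field of positive operators, precisely as the Main hypothesis requires of the weights.

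The essential observation is that $g(A_t)$ and $f(A_t)$ are both functions of the single operator $A_t$, hence commute. Writing $W_t^{\frac{1}{2}} = g(A_t)^{\frac{1}{2}}$, which is again a continuous function of $A_t$ since $g \geqs 0$, commutativity lets me collapse each symmetrically sandwiched weight:
\begin{align*}
	W_t^{\frac{1}{2}} f(A_t) W_t^{\frac{1}{2}} \:=\: g(A_t)^{\frac{1}{2}} f(A_t) g(A_t)^{\frac{1}{2}} \:=\: f(A_t)\, g(A_t),
\end{align*}
and analogously $W_t^{\frac{1}{2}} f(A_t^{-1}) W_t^{\frac{1}{2}} = f(A_t^{-1})\, g(A_t)$. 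Here $f(A_t^{-1})$ is well defined by functional calculus because $\Sp(A_t^{-1}) \subseteq [1/b, 1/a]$ and $f$ is defined on $[a,b] \cup [1/b,1/a]$.

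With these two identities in hand, the left-hand side of \eqref{eq: Kant ineq for tensor - f} becomes exactly $\int_{\Om} f(A_t) g(A_t)\,d\mu(t) \otimes_s \int_{\Om} f(A_t^{-1}) g(A_t)\,d\mu(t)$, while $\int_{\Om} W_t \,d\mu(t) = \int_{\Om} g(A_t)\,d\mu(t)$ turns the right-hand side into $\Kant \left( \int_{\Om} g(A_t)\,d\mu(t) \right)^{\otimes 2}$. The asserted inequality then follows at once from Theorem \ref{thm: Kant ineq tensor f(x) f(x-1)}, and the constant $(a^2+b^2)/(2ab)$ is inherited unchanged.

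There is no genuine obstacle here, since this corollary is a specialization rather than a new estimate. The one point deserving a moment's care is the commutativity of functions of the same operator, which yields $g(A_t)^{\frac{1}{2}} f(A_t) g(A_t)^{\frac{1}{2}} = f(A_t)\, g(A_t)$ together with its analogue for $f(A_t^{-1})$; this is exactly what converts the sandwiched weights of Theorem \ref{thm: Kant ineq tensor f(x) f(x-1)} into the plain operator products appearing in the statement.
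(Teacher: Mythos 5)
Your proposal is correct and follows exactly the paper's route: the paper also proves this corollary by setting $W_t = g(A_t)$ in Theorem \ref{thm: Kant ineq tensor f(x) f(x-1)} and noting that this yields a continuous field of positive operators. You simply make explicit the commutativity step $g(A_t)^{\frac{1}{2}} f(A_t) g(A_t)^{\frac{1}{2}} = f(A_t)\,g(A_t)$ that the paper leaves implicit, which is a reasonable bit of added care.
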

\begin{proof}
	Set $W_t = g(A_t)$ for each $t \in \Omega$ in Theorem \ref{thm: Kant ineq tensor f(x) f(x-1)}.
	Then $(W_t)_{t \in \Omega}$ is a continuous field of positive operators.
\end{proof}

The next result can be viewed as a generalization of Gr\"{u}ss inequality.

\begin{corollary} \label{cor: Gruss ineq -2}
Assume Main hypothesis.
For any $\ld \in \R$, we have
	\begin{equation} 
	\begin{split}
		\int_{\Om} A_t^{\ld+1}  \,d \mu(t) \otimes_s \int_{\Om} A_t^{\ld-1}  \,d \mu(t) 
		\:\leqs\: \Kant \left( \int_{\Om} A_t^{\ld} \,d\mu(t) \right)^{ \otimes 2}.
	\end{split}
	\end{equation}
\end{corollary}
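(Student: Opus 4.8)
The plan is to derive this inequality as a direct specialization of Corollary \ref{cor: further ineq}, by choosing the two functions appearing there so that the three integrands collapse exactly onto the required powers of $A_t$. First I would set $f(x) = x$ and $g(x) = x^{\ld}$. Under this choice, the functional calculus applied to the single operator $A_t$ gives $f(A_t)g(A_t) = A_t \, A_t^{\ld} = A_t^{\ld+1}$, $f(A_t^{-1})g(A_t) = A_t^{-1} A_t^{\ld} = A_t^{\ld-1}$, and $g(A_t) = A_t^{\ld}$, which are precisely the integrands on the two sides of the claimed inequality.

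Next I would check that $f(x)=x$ and $g(x)=x^{\ld}$ meet the hypotheses required to invoke Corollary \ref{cor: further ineq}, that is, the hypotheses of Theorem \ref{thm: Kant ineq tensor f(x) f(x-1)} together with continuity and nonnegativity of $g$. The function $f(x)=x$ is continuous on $[a,b] \cup [1/b,1/a]$, satisfies $f(x)f(1/x) = x \cdot (1/x) = 1 \leqs 1$ for all $x \in [a,b]$, and obeys the range condition $f([a,b]) = [a,b] \subseteq [a,b]$. Since $[a,b] \subseteq (0,\infty)$, the map $g(x)=x^{\ld}$ is continuous and strictly positive on $[a,b]$ for every real $\ld$, so that $(g(A_t))_{t \in \Om}$ is indeed a continuous field of positive operators, as needed for $(W_t)_{t \in \Om}$.

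I expect no genuine obstacle here: the assertion follows immediately once $f$ and $g$ are fixed as above, and the Kantorovich constant $(a^2+b^2)/(2ab)$ is carried over unchanged. The only point that warrants a word of justification is the merging of the two scalar functions into a single power, which relies on the commutativity of $A_t$, $A_t^{-1}$ and $A_t^{\ld}$ as functions of one fixed selfadjoint operator; this is what makes the products $A_t \, A_t^{\ld}$ and $A_t^{-1} A_t^{\ld}$ unambiguous and equal to $A_t^{\ld+1}$ and $A_t^{\ld-1}$, respectively.
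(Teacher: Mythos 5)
Your proposal is correct and coincides with the paper's own proof, which likewise obtains the corollary by putting $f(x)=x$ and $g(x)=x^{\lambda}$ in Corollary \ref{cor: further ineq}. Your additional verification of the hypotheses (the identity $f(x)f(1/x)=1$, the range condition, and the continuity and positivity of $g$ on $[a,b]\subseteq(0,\infty)$) only makes explicit what the paper leaves implicit.
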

\begin{proof}
	Put $f(x)=x$ and $g(x)= x^{\ld}$ in Corollary \ref{cor: further ineq}.
\end{proof}

The case $\ld=1$ and $\mu(\Omega)=1$ in this corollary is a Gr\"{u}ss type integral inequality for tensor product of operators:

	\begin{equation} \label{eq: Gruss ineq for tensor}
	\begin{split}
		\int_{\Om} A_t^{2}  \,d \mu(t) \otimes_s I 
		\:\leqs\: \Kant \left( \int_{\Om} A_t  \,d\mu(t) \right)^{ \otimes 2}.
	\end{split}
	\end{equation}

\begin{theorem} \label{cor: Kant - mean}
	Assume Main hypothesis. 
	Suppose that $1 \in [a,b]$.
	For any super-multiplicative operator monotone function $f:[0,\infty) \to [0,\infty)$ such that $f(1)=1$, 
	we have
		\begin{equation} 
	\begin{split}
		\int_{\Om} W_t^{\frac{1}{2}} f(A_t) W_t^{\frac{1}{2}} \,d \mu(t) 
		\:&\otimes_s\: 
		\int_{\Om} W_t^{\frac{1}{2}} f(A_t^{-1}) W_t^{\frac{1}{2}} \,d \mu(t) \\
		\:&\leqs\: \Kant \left( \int_{\Om} W_t \,d\mu(t)  \right)^{\otimes 2}.
	\end{split}
	\end{equation}
\end{theorem}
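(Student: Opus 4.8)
The plan is to deduce this statement directly from Theorem~\ref{thm: Kant ineq tensor f(x) f(x-1)} by checking that a super-multiplicative operator monotone function $f$ with $f(1)=1$ meets every hypothesis required there. First I would record that, being operator monotone, $f$ is continuous on $[0,\infty)$, so in particular it is a continuous real-valued function on $[a,b] \cup [1/b,1/a]$. The first product condition $f(x)f(1/x) \leqs 1$ for all $x \in [a,b]$ is then immediate from super-multiplicativity: taking $y = 1/x$ in $f(xy) \geqs f(x)f(y)$ gives $1 = f(1) \geqs f(x)f(1/x)$.

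The substantive point will be the range condition, for which I would aim to verify the first alternative $f([a,b]) \subseteq [a,b]$. Here I intend to use two standard facts about a nonnegative operator monotone function (see, e.g., \cite[Chapter 4]{Hiai-Petz}): such an $f$ is increasing and concave on $[0,\infty)$, with $f(0) \geqs 0$. Concavity together with $f(0) \geqs 0$ and $f(1)=1$ yields $f(x) \geqs x$ on $[0,1]$, by comparing $f$ with the chord joining $(0,f(0))$ and $(1,1)$. For $x \geqs 1$ I would combine this with super-multiplicativity: since $1/x \in (0,1]$ gives $f(1/x) \geqs 1/x > 0$, the bound $f(x)f(1/x) \leqs 1$ forces $f(x) \leqs 1/f(1/x) \leqs x$. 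Thus $f(x) \geqs x$ on $[0,1]$ and $f(x) \leqs x$ on $[1,\infty)$.

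Invoking the hypothesis $1 \in [a,b]$, that is $a \leqs 1 \leqs b$, these two comparisons close the argument. For $x \in [a,1]$ one has $f(x) \geqs x \geqs a$ and $f(x) \leqs f(1) = 1 \leqs b$; for $x \in [1,b]$ one has $f(x) \leqs x \leqs b$ and $f(x) \geqs f(1) = 1 \geqs a$. Hence $f(x) \in [a,b]$ for every $x \in [a,b]$, so $f([a,b]) \subseteq [a,b]$ and the second hypothesis of Theorem~\ref{thm: Kant ineq tensor f(x) f(x-1)} is satisfied with its first alternative. Applying that theorem then reproduces the claimed inequality verbatim, with the same best constant $(a^2+b^2)/(2ab)$ unaffected.

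I expect the only real obstacle to be the range condition $f([a,b]) \subseteq [a,b]$; the delicate step is producing the two-sided comparison of $f(x)$ with $x$, which requires both the concavity of nonnegative operator monotone functions and a clever use of super-multiplicativity through the reciprocal $1/x$. Once that comparison is in hand, everything else is a routine verification of hypotheses.
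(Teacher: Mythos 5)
Your proof is correct, but it takes a genuinely different route from the paper. The paper's own proof is a two-line reduction through the Kubo--Ando machinery: by Proposition \ref{prop: Kubo-Ando} the function $f$ (being operator monotone with $f(1)=1$) is the representing function of an operator mean $\sigma$ via $f(A)=I\,\sigma\,A$, and the statement is then Theorem \ref{thm: Kant - mean} applied with the constant field $I$ in one slot and $A_t$ in the other --- the hypothesis $1\in[a,b]$ being exactly what lets the constant field $I$ satisfy the Main hypothesis. You instead reduce to Theorem \ref{thm: Kant ineq tensor f(x) f(x-1)} by verifying its scalar hypotheses: the product bound $f(x)f(1/x)\leqs f(1)=1$ is immediate from super-multiplicativity, and the range condition $f([a,b])\subseteq[a,b]$ follows from your two-sided comparison $f(x)\geqs x$ on $[0,1]$ (concavity of nonnegative operator monotone functions plus $f(0)\geqs 0$, $f(1)=1$) and $f(x)\leqs x$ on $[1,\infty)$ (super-multiplicativity through the reciprocal), combined with monotonicity and $a\leqs 1\leqs b$; all steps check out. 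What each approach buys: the paper's is shorter given that Lemma \ref{lem: f is super multiplicative} and Theorem \ref{thm: Kant - mean} are already in place, and it exhibits the result as a genuine specialization of the operator-mean inequality; yours is more elementary and self-contained, bypassing the operator-mean apparatus entirely at the cost of importing the standard fact that nonnegative operator monotone functions on $[0,\infty)$ are concave, and it has the side benefit of making visible why the hypothesis $1\in[a,b]$ is needed (it is what pins the range of $f$ inside $[a,b]$).
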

\begin{proof}
	By Proposition \ref{prop: Kubo-Ando}, there is an operator mean $\sigma$ such that $f(A)= I \,\sigma\, A$
	for any $A \geqs 0$. The desired result now follows from Theorem \ref{thm: Kant - mean} by considering $I \,\sigma A_t$ instead of $A_t \,\sigma\, B_t$.
\end{proof}


\begin{corollary} \label{cor: last}
	Assume Main hypothesis. Suppose that $1 \in [a,b]$.
	For any $\ap \in [-1,1]$ and a continuous function $g:[a,b] \to [0,\infty)$, we have
		\begin{equation}  \label{eq: prelast} 
	\begin{split}
		\int_{\Om}  A_t^{\ap}  g(A_t) \,d \mu(t) \otimes_s 
		\int_{\Om}  A_t^{-\ap}  g(A_t) \,d \mu(t)
		\:\leqs\: \Kant \left( \int_{\Om} g(A_t) \,d\mu(t) \right)^{\otimes 2}.
	\end{split}
	\end{equation}
\end{corollary}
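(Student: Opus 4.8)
The plan is to invoke Theorem~\ref{cor: Kant - mean} with the power function $f(x) = x^{\ap}$ and the weight field $W_t = g(A_t)$. A preliminary reduction comes from the symmetry of the symmetrized tensor product: since $P \otimes_s Q = Q \otimes_s P$ for all operators $P,Q$, interchanging the two integrals shows that the inequality \eqref{eq: prelast} for the exponent $\ap$ is literally the same statement as the one for $-\ap$. Consequently I may assume without loss of generality that $\ap \in [0,1]$, and it suffices to establish the claim on this subrange.

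For $\ap \in [0,1]$ I verify that $f(x)=x^{\ap}$ meets the three hypotheses of Theorem~\ref{cor: Kant - mean}. By the L\"{o}wner-Heinz inequality the map $x \mapsto x^{\ap}$ is operator monotone on $[0,\infty)$ precisely when $\ap \in [0,1]$; this is the one place where the constraint on the exponent enters. Clearly $f(1)=1^{\ap}=1$, and $f(xy)=(xy)^{\ap}=x^{\ap}y^{\ap}=f(x)f(y)$, so $f$ is in fact multiplicative and hence super-multiplicative. Observing that $(g(A_t))_{t \in \Om}$ is a continuous field of positive operators (as $g \geqs 0$ and $t \mapsto A_t$ is norm continuous), Theorem~\ref{cor: Kant - mean} applies with $f(A_t)=A_t^{\ap}$ and $f(A_t^{-1})=A_t^{-\ap}$, giving
\[
\int_{\Om} g(A_t)^{\frac{1}{2}} A_t^{\ap} g(A_t)^{\frac{1}{2}} \,d\mu(t) \otimes_s \int_{\Om} g(A_t)^{\frac{1}{2}} A_t^{-\ap} g(A_t)^{\frac{1}{2}} \,d\mu(t) \leqs \Kant \left( \int_{\Om} g(A_t) \,d\mu(t) \right)^{\otimes 2}.
\]
The concluding step is to note that $g(A_t)$ and $A_t^{\pm \ap}$ are continuous functions of the single operator $A_t$ and therefore commute, so that $g(A_t)^{\frac{1}{2}} A_t^{\pm \ap} g(A_t)^{\frac{1}{2}} = A_t^{\pm \ap} g(A_t)$; substituting this identity into both integrands collapses the left-hand side to exactly the expression in \eqref{eq: prelast}.

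The only genuine obstacle is the sign of $\ap$. The Kubo-Ando framework underlying Theorem~\ref{cor: Kant - mean} demands an operator monotone representing function, yet $x^{\ap}$ ceases to be operator monotone the moment $\ap<0$, so a direct application is unavailable for negative exponents. The symmetry of $\otimes_s$ recorded in the first paragraph is precisely what rescues this case, folding the range $[-1,0)$ back onto the admissible regime $[0,1]$ and thereby covering the full interval $\ap \in [-1,1]$.
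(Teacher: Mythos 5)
Your proposal follows the same route as the paper's own proof: apply Theorem~\ref{cor: Kant - mean} with $f(x)=x^{\ap}$ and $W_t=g(A_t)$, then use the fact that $g(A_t)$ commutes with $A_t^{\pm\ap}$ to collapse $g(A_t)^{\frac{1}{2}}A_t^{\pm\ap}g(A_t)^{\frac{1}{2}}$ into $A_t^{\pm\ap}g(A_t)$. The one substantive difference is your treatment of negative exponents. The paper simply calls $x^{\ap}$ ``the operator monotone function'' for $\ap\in[-1,1]$, but by L\"{o}wner--Heinz $x^{\ap}$ is operator monotone on $[0,\infty)$ only for $\ap\in[0,1]$ (for $\ap<0$ it is operator \emph{decreasing}), so Theorem~\ref{cor: Kant - mean} does not apply directly in that range. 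Your reduction via the symmetry $P\otimes_s Q=Q\otimes_s P$, which identifies the statement for $\ap$ with the statement for $-\ap$, is exactly the repair needed, and it is correct. In short: your argument is the paper's argument plus a fix for a genuine (if minor) gap in the paper's own proof; nothing in your write-up is wrong.
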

\begin{proof}
	Consider the operator monotone function $f(x)= x ^{\ap}$. 
Note that this function is super-multiplicative and
satisfies $f(1)=1$.
The desired result now follows by replacing $W_t$ by $g(A_t)$ in Theorem \ref{cor: Kant - mean}.
\end{proof}

Under the hypothesis of Corollary \ref{cor: last}, we have an interesting operator inequality.
For each $\lambda \in \R$, putting $g(x)= x^{\lambda}$ in \eqref{eq: prelast} yields
	\begin{equation}  
	\begin{split}
		\int_{\Om}  A_t^{\lambda+\ap} \,d \mu(t) \otimes_s 
		\int_{\Om}  A_t^{\lambda-\ap}  \,d \mu(t)
		\:\leqs\: \Kant \left( \int_{\Om} A_t^{\lambda} \,d\mu(t) \right)^{\otimes 2}.
	\end{split}
	\end{equation}
	
Discrete versions for the inequalities in this section can be obtained by considering $\Omega$ to be a finite space equipped with the counting measure.
\\

{\bf Acknowledgement.}
  This research was supported by King Mongkut's Institute
of Technology Ladkrabang Research Fund grant no. KREF045710.

\end{document}